\documentclass[bj,preprint]{imsart}

\RequirePackage[OT1]{fontenc}
\RequirePackage{amsthm,amsmath}
\RequirePackage[colorlinks,citecolor=blue,urlcolor=blue]{hyperref}
\bibliographystyle{imsart-number}
\RequirePackage[numbers]{natbib}
\usepackage{amsmath}
\usepackage{amssymb}
\usepackage{amsthm}
\usepackage{color}
\usepackage{amsfonts}
\usepackage{bbm}


\arxiv{arXiv:1311.6923}

\startlocaldefs

\newcommand{\mmp}{\mathbb{P}}

\newcommand{\od}{\overset{\mathrm{d}}{=}}
\newcommand{\dod}{\overset{\mathrm{d}}{\to}}

\newcommand{\vaguec}{\overset{v}{\to}}

\newcommand{\me}{\mathbb{E}}
\newcommand{\mz}{\mathbb{Z}}
\newcommand{\mr}{\mathbb{R}}
\newcommand{\mn}{\mathbb{N}}

\newcommand{\Q}{\mathbb{Q}}

\newcommand{\lin}{\underset{n\to\infty}{\lim}}

\newcommand{\lit}{\underset{t\to\infty}{\lim}}

\newcommand{\1}{\mathbbm{1}}

\newtheorem{thm}{Theorem}[section]
\newtheorem{lemma}[thm]{Lemma}

\newtheorem{assertion}[thm]{Proposition}
\theoremstyle{definition}

\newtheorem{example}[thm]{Example}
\theoremstyle{remark}
\newtheorem{rem}[thm]{Remark}

\endlocaldefs

\begin{document}

\begin{frontmatter}

\title{Asymptotics of random processes with immigration II: convergence to stationarity}

\runtitle{Asymptotics of random processes with immigration II}

\begin{aug}

\author{\fnms{Alexander} \snm{Iksanov}\thanksref{a,e1}\corref{}\ead[label=e1,mark]{iksan@univ.kiev.ua}}
\author{\fnms{Alexander} \snm{Marynych}\thanksref{a,e2}\ead[label=e2,mark]{marynych@unicyb.kiev.ua}}
\and
\author{\fnms{Matthias} \snm{Meiners}\thanksref{b,e3}\ead[label=e3,mark]{meiners@mathematik.tu-darmstadt.de}}

\runauthor{A. Iksanov et al.}

\affiliation{Taras Shevchenko National University of Kyiv and Technical University of Darmstadt}
\address[a]{Faculty of Cybernetics, Taras Shevchenko National University of Kyiv, 01601 Kyiv, Ukraine. \printead{e1,e2}}
\address[b]{Fachbereich Mathematik, Technische Universitat Darmstadt, 64289 Darmstadt, Germany. \printead{e3}}

\end{aug}

\begin{abstract}
Let $X_1, X_2,\ldots$ be random elements of the
Skorokhod space $D(\mr)$ and $\xi_1, \xi_2, \ldots$ positive
random variables such that the pairs $(X_1,\xi_1),
(X_2,\xi_2),\ldots$ are independent and identically distributed.
We call the random process $(Y(t))_{t \in \mr}$ defined by
$Y(t):=\sum_{k \geq
0}X_{k+1}(t-\xi_1-\ldots-\xi_k)\1_{\{\xi_1+\ldots+\xi_k\leq t\}}$,
$t\in\mr$ random process with immigration at the epochs of a
renewal process. Assuming that $X_k$ and $\xi_k$ are independent
and that the distribution of $\xi_1$ is nonlattice and has finite
mean we investigate weak convergence of $(Y(t))_{t\in\mr}$ as
$t\to\infty$ in $D(\mr)$ endowed with the $J_1$-topology. The
limits are stationary processes with immigration.
\end{abstract}

\begin{keyword}
\kwd{random point process}
\kwd{renewal shot noise process}
\kwd{stationary renewal process}
\kwd{weak convergence in the Skorokhod space}
\end{keyword}



\end{frontmatter}

\section{Introduction}
Denote by $D(\mr)$ the Skorokhod space of right-continuous
real-valued functions which are defined on $\mr$ and have finite
limits from the left at each point of $\mr$. Let
$X:=(X(t))_{t\in\mr}$ be a random process with paths in $D(\mr)$
satisfying $X(t)=0$ for all $t < 0$ and let $\xi$ be a positive
random variable. 
Further, let $(X_1,\xi_1), (X_2, \xi_2),\ldots$ be i.i.d.~copies of the pair $(X,\xi)$
and denote by $(S_n)_{n\in\mn_0}$ the zero-delayed random walk with increments $\xi_k$, that is,
\begin{equation*}
S_0 := 0, \qquad    S_n:=\xi_1+\ldots+\xi_n, \quad  n\in\mn.
\end{equation*}
Following \cite{Iksanov+Marynych+Meiners:2015I} we call {\it
random process with immigration} the process $Y :=
(Y(t))_{t\in\mr}$ defined by
\begin{equation}    \label{eq:Y(t)}
Y(t) := \sum_{k\geq 0}X_{k+1}(t-S_k) 
, \quad   t\in\mr.
\end{equation}
The motivation for the term is discussed in
\cite{Iksanov+Marynych+Meiners:2015I} where the reader can also
find a list of possible applications and some bibliographic
comments.

Continuing the line of research initiated in
\cite{Iksanov:2013,Iksanov+Marynych+Meiners:2014,Iksanov+Marynych+Meiners:2015I}
we are interested in weak convergence of random processes with
immigration. We treat the situation when $\me [|X(t)|]$ is finite
and tends to $0$ quickly as $t\to\infty$ while $\me \xi<\infty$.
Then $Y$ is the superposition of a regular stream of freshly
started processes with quickly fading contributions of the
processes that started early. As $t$ becomes large, these
competing effects balance on a distributional level and $Y$
approaches stationarity. Under these assumptions the joint
distribution of $(X,\xi)$ should affect the asymptotic behavior of
$Y$. However, we refrain from investigating this by assuming that
$X$ and $\xi$ are independent.

We are only aware of two papers which are concerned with weak
convergence of processes $Y$ to their stationary versions in the
case when $\xi$ has distribution other than
exponential\footnote{Various references related to a simpler
situation when the distribution of $\xi$ is exponential are given
in \cite{Iksanov+Marynych+Meiners:2015I}.}. In
\cite{Iksanov+Marynych+Meiners:2014} the authors prove weak
convergence of the finite-dimensional distributions of
$\big(\sum_{k\geq 0} h(ut-S_k)\1_{\{S_k \leq ut\}}\big)_{u>0}$ as
$t\to\infty$ for a deterministic function $h$. Notice that the
time is assumed scaled ($ut$) in Theorem 2.1 of
\cite{Iksanov+Marynych+Meiners:2014}, whereas we translate the
time ($u+t$) in Theorem \ref{main1} of the present paper.
Furthermore, the approach taken in
\cite{Iksanov+Marynych+Meiners:2014}, which is partly discussed in
Remark \ref{remark1}, differs from that exploited here. Theorem
6.1 in \cite{Miller:1974} is a result about weak convergence of
the one-dimensional distributions of $Y(t)$ as $t\to\infty$ for
$X(t)=g(t,\eta)$, where $g:[0,\infty)\times \mr\to [0,\infty)$ is
a deterministic function satisfying certain conditions and $\eta$
is a random variable independent of $\xi$. As has already been
mentioned in \cite{Iksanov+Marynych+Meiners:2014}, the cited
theorem does not hold in the generality stated in
\cite{Miller:1974}.

\section{Main results}
Before we formulate our results, some preliminary work has to be
done.
\subsection{Stationary renewal processes and stationary random processes with immigration}
 Suppose that $\mu := \me \xi<\infty$, and that
the distribution of $\xi$ is nonlattice, i.e., it is not
concentrated on any lattice $d\mz$, $d>0$. Further, we stipulate
hereafter that the basic probability space on which $(X_k)_{k \in
\mn}$ and $(\xi_k)_{k \in \mn}$ are defined is rich enough to
accommodate
\begin{itemize}
    \item
        an independent copy $(\xi_{-k})_{k \in \mn}$ of $(\xi_k)_{k \in \mn}$;
    \item
        a random variable $\xi_0$ which is independent of $(\xi_k)_{k \in \mz \setminus \{0\}}$ and has distribution
        \begin{equation*}
        \mmp\{\xi_0 \in {\rm d}x\} = \mu^{-1} \me [\xi \1_{\{\xi \in {\rm d}x\}}], \quad    x\geq 0;
        \end{equation*}
   \item
        a random variable $U$ which is independent of $(\xi_k)_{k\in\mz}$ and has the uniform distribution on $[0,1]$;
   \item
        a family $(X_k)_{k\in\mz}$ of i.i.d.~random elements of $D(\mr)$ that is independent of $(\xi_k)_{k\in\mz}$ and $U$.
\end{itemize}
Set $\nu(t):=\inf\{k\in\mn_0: S_k>t\}$, $t\in\mr$,
\begin{equation*}
S_{-k} := -(\xi_{-1}+\ldots+\xi_{-k}), \ \ k \in \mn,
\end{equation*}
and
\begin{equation*}
S^\ast_0 := U \xi_0,\ S^\ast_{-1} := -(1-U) \xi_0, \;
S^\ast_k = S^\ast_0+S_k, \; k \in \mn, \;  S^\ast_{-k} :=
S^\ast_{-1} + S_{-k+1}, \; k\in \mn\setminus\{1\}.
\end{equation*}
Recall\footnote{See e.g.\ \cite[Section 3.10]{Resnick:1992}.} that
the distribution of both, $S^\ast_0$ and $-S^\ast_{-1}$, coincides
with the limiting distribution of the overshoot $S_{\nu(t)}-t$ and
the undershoot $t-S_{\nu(t)-1}$ as $t\to\infty$:
\begin{equation*}
\mmp\{S^\ast_0\in {\rm d}x\} = \mmp\{-S^\ast_{-1}\in {\rm d}x\} =
\mu^{-1}\mmp\{\xi>x\}\1_{(0,\,\infty)}(x){\rm d}x.
\end{equation*}
It is well-known \cite[Chapter 8, Theorem 4.1]{Thorisson:2000}
that the point process $\sum_{k \in \mz} \delta_{S^\ast_k}$ is
shift-invariant, that is, $\sum_{k \in \mz} \delta_{S^\ast_k}$ has
the same distribution as $\sum_{k \in \mz} \delta_{S^\ast_k+t}$
for every $t \in \mr$. In particular, the intensity measure of
this process is a constant multiple of the Lebesgue measure where
the constant can be identified as $\mu^{-1}$ by the elementary
renewal theorem. In conclusion,
\begin{equation}    \label{eq:intensity=Lebesgue}
\me\bigg[\sum_{k \in \mz} \delta_{S^\ast_k}({\rm d}x) \bigg]    ~=~ \frac{{\rm d}x}{\mu}.
\end{equation}
Fix any $u\in\mr$.
Since $\lim_{k \to -\infty} S_k^\ast=-\infty$ a.s., the sum
\begin{equation*}
\sum_{k\leq -1}X_{k+1}(u+S_k^\ast)\1_{\{S_k^\ast\geq -u\}}
\end{equation*}
is a.s.~finite because the number of non-zero summands is a.s.~finite. Define
\begin{equation*}
Y^\ast(u):=\sum_{k\in\mz}X_{k+1}(u+S_k^\ast) = \sum_{k\in\mz}X_{k+1}(u+S_k^\ast)\1_{\{S_k^\ast \geq -u\}}
\end{equation*}
with the random variable $Y^\ast(u)$ being a.s.~finite provided
that the series $\sum_{k\geq 0}X_{k+1}(u+S_k^\ast)\1_{\{S_k\geq
-u\}}$ converges in probability. It is natural to call
$(Y^{\ast}(u))_{u\in\mr}$ {\it the stationary random process with
immigration.}

\subsection{Convergence in $D(\mr)$}
Consider the subset $D_0$ of the Skorokhod space $D(\mr)$ composed
of those functions $f\in D(\mr)$ which have finite limits
$f(-\infty):=\lim_{t\to-\infty}f(t)$ and
$f(\infty):=\lim_{t\to+\infty}f(t)$. For $a,b\in\mr$, $a<b$ let
$d_0^{a,b}$ be the Skorokhod metric on $D[a,b]$, i.e.,
$$  d_0^{a,b}(x,y)
= \inf_{\lambda \in \Lambda_{a,b}} \bigg( \sup_{t \in [a,b]} |x(\lambda(t)) - y(t)| \vee \sup_{s \not = t} \Big| \log \Big(\frac{\lambda(t)-\lambda(s)}{t-s}\Big)\Big|\bigg)$$
where $\Lambda_{a,b} = \{\lambda: \lambda \text{ is a strictly increasing continuous function on } [a,b] \text{ with } \lambda(a)=a, \lambda(b)=b\}.$
Following \cite[Section 3]{Lindvall:1973}, for
$f,g\in D_0$, put
\begin{equation*}
d_0(f,g):=d_0^{0,1}(\overline{\phi}(f),\overline{\phi}(g)),
\end{equation*}
where $$\phi(t):=\log (t/(1-t)),\;\;t\in
(0,1),\;\;\phi(0)=-\infty,\;\;\phi(1):=+\infty$$ and
$$
\overline{\phi}:D_0 \to D[0,1],\;\;\overline{\phi}(x)(\cdot):=x(\phi(\cdot)),\;\;x\in D_0.
$$
Then $(D_0,d_0)$ is a complete separable metric space. Mimicking
the argument given in Section 4 in \cite{Lindvall:1973} and using
$d_0$ as a basis one can construct a metric $d$ (its explicit form
is of no importance here) on $D(\mr)$ such that $(D(\mr),d)$ is a complete
separable metric space. We shall need the following
characterization of the convergence in $(D(\mr),d)$, see Theorem 1(b)
in \cite{Lindvall:1973} and Theorem 12.9.3(ii) in
\cite{Whitt:2002} for the convergence in $D[0,\infty)$.
\begin{assertion}\label{conv_in_d}
Suppose $f_n, f\in D(\mr)$, $n\in\mn$. The following conditions are
equivalent:
\begin{itemize}
\item[(i)] $f_n\to f$ in $(D(\mr),d)$ as $n\to\infty$;
\item[(ii)] there exist 
\begin{eqnarray*}
&&\hspace{-1cm}\lambda_n \in \Lambda:=\{\lambda : \lambda\text{ is a strictly increasing  continuous function on } \mr \\
&&\hspace{8cm}\text{ with } \lambda(\pm\infty)=\pm\infty\}
\end{eqnarray*}
such that, for any finite $a$ and $b$, $a<b$,
$$ \lin \max\Big\{\sup_{u\in [a,\,b]}
|f_n(\lambda_n(u))-f(u)|,\,\sup_{u\in [a,\,b]}|\lambda_n(u)-u|\Big\}=0;
$$
\item[(iii)] for any finite $a$ and $b$, $a<b$ which are continuity points of $f$
it holds that $f_n|_{[a,\,b]}\to f|_{[a,\,b]}$ in
$(D[a,b],d_0^{a,b})$ as $n\to\infty$, where $g|_{[a,\,b]}$ denotes
the restriction of $g\in D(\mr)$ to $[a,b]$.
\end{itemize}
\end{assertion}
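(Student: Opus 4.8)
\emph{Strategy.} The statement is the two-sided analogue of Theorem~1(b) in \cite{Lindvall:1973} (and of Theorem~12.9.3(ii) in \cite{Whitt:2002}), and the plan is to carry the corresponding $D[0,\infty)$-arguments over to $D(\mr)$; concretely, I would establish the two equivalences (i)$\Leftrightarrow$(iii) and (ii)$\Leftrightarrow$(iii). For (i)$\Leftrightarrow$(iii) the only feature of $d$ that is used --- and the one delivered by mimicking Section~4 of \cite{Lindvall:1973} --- is that $d$ can be built from the restriction maps $f\mapsto f|_{[-m,\,m]}$, $m\in\mn$, so that $d(f_n,f)\to 0$ if and only if $d_0^{-m,\,m}(f_n|_{[-m,\,m]},f|_{[-m,\,m]})\to 0$ for every $m$ for which $-m$ and $m$ are continuity points of $f$ (the continuity requirement on the truncation points being unavoidable, since truncation at a jump point is a discontinuous operation). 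As the discontinuity set of $f$ is at most countable, such $m$ form an unbounded subset of $\mn$. Combined with the classical fact that the restriction map $D[a,b]\to D[a',b']$ with $a<a'<b'<b$ is continuous at every function continuous at $a'$ and $b'$, this yields (i)$\Leftrightarrow$(iii): from (i) one obtains $d_0$-convergence on $[-m,m]$ for co-countably many $m$, hence on every compact continuity interval; conversely, (iii) applied on a continuity interval $[a,b]\supset[-m,m]$ and restricted back to $[-m,m]$ recovers the defining convergence, hence (i).

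\emph{(iii)$\Rightarrow$(ii).} I would pick, for each $k\in\mn$, continuity points $a_k<b_k$ of $f$ with $a_k\downarrow-\infty$ and $b_k\uparrow\infty$. By (iii) and the definition of $d_0^{a_k,\,b_k}$ there are $\lambda_n^{(k)}\in\Lambda_{a_k,b_k}$ with $\sup_{u\in[a_k,\,b_k]}|f_n(\lambda_n^{(k)}(u))-f(u)|\to 0$ and $\sup_{s\neq t}|\log((\lambda_n^{(k)}(t)-\lambda_n^{(k)}(s))/(t-s))|\to 0$ as $n\to\infty$; since $\lambda_n^{(k)}$ fixes $a_k$ and $b_k$, the latter relation forces $\sup_{u\in[a_k,\,b_k]}|\lambda_n^{(k)}(u)-u|\to 0$ as well. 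Then choose $n_1<n_2<\cdots$ so that for $n\geq n_k$ both suprema are $<1/k$, and for $n_k\leq n<n_{k+1}$ define $\lambda_n\in\Lambda$ by setting $\lambda_n=\lambda_n^{(k)}$ on $[a_k,b_k]$ and $\lambda_n=\mathrm{id}$ off $[a_k,b_k]$ (these pieces are consistent because $\lambda_n^{(k)}$ fixes the endpoints), and put $\lambda_n=\mathrm{id}$ for $n<n_1$. Given finite $a<b$, fix $k_0$ with $[a,b]\subset[a_{k_0},b_{k_0}]$; for $n\geq n_{k_0}$ the map $\lambda_n$ agrees on $[a,b]$ with some $\lambda_n^{(k)}$, $k\geq k_0$, so the maximum in (ii) is $<1/k\to 0$. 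This is (ii).

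\emph{(ii)$\Rightarrow$(iii).} Fix continuity points $a<b$ of $f$ and enlarge to continuity points $a'<a$, $b<b'$. The time changes $\lambda_n$ from (ii) need not fix $a$ and $b$, so for small $\delta>0$ I would replace $\lambda_n$ on $[a,a+\delta]$ by the affine map sending $a\mapsto a$ and $a+\delta\mapsto\lambda_n(a+\delta)$, likewise on $[b-\delta,b]$, leaving $\lambda_n$ unchanged on $[a+\delta,b-\delta]$; the result $\tilde\lambda_n$ lies in $\Lambda_{a,b}$. Since $\sup_{[a',\,b']}|\lambda_n(u)-u|\to 0$, the log-derivative term of $\tilde\lambda_n$ is $o(1)$, and on $[a,a+\delta]\cup[b-\delta,b]$ the time change has been perturbed by $O(\delta)+o(1)$; a routine estimate invoking $\sup_{[a',\,b']}|f_n(\lambda_n(u))-f(u)|\to 0$ and the continuity of $f$ at $a$ and $b$ then gives
\[
\limsup_{n\to\infty}\,\sup_{u\in[a,\,b]}|f_n(\tilde\lambda_n(u))-f(u)|\ \leq\ \omega_f(a;3\delta)+\omega_f(b;3\delta),
\]
where $\omega_f(c;\varepsilon):=\sup\{|f(x)-f(y)|:x,y\in[c-\varepsilon,c+\varepsilon]\}$. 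As $\omega_f(a;3\delta)\to 0$ and $\omega_f(b;3\delta)\to 0$ when $\delta\downarrow 0$, letting $\delta\downarrow 0$ yields $d_0^{a,b}(f_n|_{[a,\,b]},f|_{[a,\,b]})\to 0$, which is (iii).

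\emph{Main obstacle.} None of the steps is deep; the work is organizational, namely verifying that the machinery of \cite{Lindvall:1973} and \cite{Whitt:2002} built for $D[0,\infty)$ transfers to $D(\mr)$ with only cosmetic changes. The two places calling for genuine (though routine) care are the diagonal gluing that produces a \emph{single} sequence $(\lambda_n)\subset\Lambda$ in (iii)$\Rightarrow$(ii), and the endpoint adjustment in (ii)$\Rightarrow$(iii): the latter is the one spot where continuity of $f$ at the interval endpoints is really used, and it is precisely what forces (iii) to be stated only for continuity points.
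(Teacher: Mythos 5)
First, a point of calibration: the paper does not prove Proposition \ref{conv_in_d} at all. It is stated as a known characterization, with Theorem 1(b) of Lindvall and Theorem 12.9.3(ii) of Whitt cited for the $D[0,\infty)$ case, so your proposal is not competing with an in-text argument but supplying the adaptation the authors delegate to the literature. Your architecture --- derive (i)$\Leftrightarrow$(iii) from the way $d$ is assembled from compact windows, then prove (ii)$\Leftrightarrow$(iii) by surgery on time changes --- is the right one and is exactly how those sources proceed on the half-line. The diagonal gluing in (iii)$\Rightarrow$(ii) and the observation that the log-slope bound together with fixed endpoints forces $\sup_{[a_k,b_k]}|\lambda_n^{(k)}(u)-u|\to 0$ are both correct.

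Two intermediate claims, however, are false as stated. (1) In (i)$\Leftrightarrow$(iii) you assert that the integers $m$ for which $-m$ and $m$ are continuity points of $f$ form an unbounded subset of $\mn$. Countability of the discontinuity set gives no such thing: $f=\sum_{k\in\mz}\1_{[2k,\,2k+1)}(\cdot)$ jumps at every integer, and then your stated characterization of $d$-convergence is vacuous. The repair is routine --- run the argument along an arbitrary sequence of continuity points $s_m\downarrow-\infty$, $t_m\uparrow\infty$ (which exists by countability), or define $d$ via tapered truncations as Lindvall actually does, so that the window maps are continuous for \emph{every} $m$ --- but as written the step fails. (2) In (ii)$\Rightarrow$(iii) you claim that $\sup_{[a',b']}|\lambda_n(u)-u|\to 0$ makes the log-slope term of $\tilde\lambda_n$ an $o(1)$. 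It does not: a time change can stay uniformly within $\varepsilon_n$ of the identity while having slope $2$ on an interval of length $\varepsilon_n$, so $\sup_{s\neq t}|\log((\tilde\lambda_n(t)-\tilde\lambda_n(s))/(t-s))|$ need not tend to $0$. The conclusion survives only because of the standard (but here unstated) fact that on $D[a,b]$ the complete metric $d_0^{a,b}$ and the incomplete Skorokhod metric that measures time changes in the uniform norm generate the same topology; hence producing $\tilde\lambda_n\in\Lambda_{a,b}$ with $\sup_u|\tilde\lambda_n(u)-u|\to 0$ and $\sup_u|f_n(\tilde\lambda_n(u))-f(u)|\to 0$, which your construction does achieve, already yields $d_0^{a,b}$-convergence. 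Either invoke that equivalence explicitly or rebuild $\tilde\lambda_n$ as a piecewise-linear interpolation adapted to the large jumps of $f$; without one of these, the $o(1)$ claim is a genuine gap.
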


\subsection{Main result}

Let $\mathcal{D}_X := \{t \geq 0: \mmp\{X(t)\neq X(t-)\}>0\}$ and
$\Delta_X := \{a-b:a,b\in\mathcal{D}_X\}$. Note that
$\mathcal{D}_X$, and hence $\Delta_X$, may be empty. In the
following, we write `$Z_t\Rightarrow Z$ as $t\to\infty$ on
$(S,d^\ast)$'\, to denote weak convergence of processes on a
complete separable metric space $(S,d^\ast)$ and
`$\overset{d}{\to}$' to denote convergence in distribution of
random variables or random vectors.

Our main result, Theorem \ref{main1}, provides (a) sufficient
conditions for weak convergence of the finite-dimensional
distributions of $(Y(t+u))_{u\in\mr}$ as $t\to\infty$ and (b) more
restrictive sufficient conditions for weak convergence of the same
processes in $(D(\mr),d)$.
\begin{thm}\label{main1}
Suppose that
\begin{itemize}
    \item $X$ and $\xi$ are independent;
    \item $\mu:=\me\xi<\infty$;
    \item the distribution of $\xi$ is nonlattice.
\end{itemize}

\begin{itemize}
    \item[(a)]
        If the function $G(t):=\me [|X(t)|\wedge 1]$ is
        directly Riemann integrable\footnote{See p.\;232 in \cite{Resnick:2002} for the definition of direct Riemann integrability.} (dRi)
        on $[0,\infty)$, then, for each
        $u\in\mr$, the series $\sum_{k\geq 0}X_{k+1}(u+S_k^\ast)\1_{\{S_k^\ast\geq -u\}}$
        is absolutely convergent with probability one, and, for any $n\in\mn$ and any finite $u_1<u_2<\ldots<u_n$,
        \begin{equation}\label{main_relation}
        \big(Y(t+u_1),\ldots, Y(t+u_n)\big) \ \dod \
        \big(Y^\ast(u_1),\ldots, Y^\ast(u_n)\big), \ \ t\to\infty.
        \end{equation}
    \item[(b)]
        If, for some $\varepsilon>0$, the function
        $H_{\varepsilon}(t) := \me [\sup_{u\in[t,\,t+\varepsilon]} |X(u)|\wedge 1]$ is dRi on $[0,\infty)$, and
        \begin{equation}\label{no_common_disc_ass} \mmp\{S_j\in \Delta_X\}=0
        \end{equation}
        for each $j\in\mn$, then
        \begin{equation}\label{main_relation_func}
        Y(t+u) \ \Rightarrow \ Y^\ast(u), \ \ t\to\infty        \quad   \text{in } (D(\mr),d).
        \end{equation}
\end{itemize}
\end{thm}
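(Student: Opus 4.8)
The plan is to derive both statements of Theorem \ref{main1} from a single truncation device. Fix $A>0$ and split $Y(t+u)=Y_A(t+u)+Y^A(t+u)$, where $Y_A(t+u):=\sum_{k\geq 0}X_{k+1}(t+u-S_k)\1_{\{S_k>t-A\}}$ keeps the contributions of the immigrants that arrived after time $t-A$ and $Y^A(t+u)$ keeps the contributions of the older ones. Using that $X$ and $\xi$ are independent, that $X_{k+1}$ is independent of $S_k$, and the elementary bound $\min(\sum_j c_j,1)\leq\sum_j\min(c_j,1)$ valid for $c_j\geq 0$, one gets, with $U:=\sum_{k\geq 0}\mmp\{S_k\in\cdot\,\}$ the renewal measure,
\begin{equation*}
\me\big[|Y^A(t+u)|\wedge 1\big]\ \leq\ \int_{[0,\,t-A]}G(t+u-s)\,U(\mathrm{d} s),
\end{equation*}
and, for part (b), after covering $[a,b]$ by $O(1/\varepsilon)$ intervals of length $\varepsilon$ and using $(\sup_v a_v)\wedge 1=\sup_v(a_v\wedge 1)$,
\begin{equation*}
\me\Big[\sup_{u\in[a,\,b]}|Y^A(t+u)|\wedge 1\Big]\ \leq\ \sum_{i}\int_{[0,\,t-A]}H_\varepsilon(t+a+i\varepsilon-s)\,U(\mathrm{d} s).
\end{equation*}
Since $G$ (respectively $H_\varepsilon$) is dRi and the distribution of $\xi$ is nonlattice with finite mean, the key renewal theorem and Blackwell's theorem show that, for each fixed $A$, the $\limsup_{t\to\infty}$ of the right-hand sides is bounded above by a tail integral of $G$ (respectively a finite sum of tail integrals of $H_\varepsilon$) starting near $A$, which tends to $0$ as $A\to\infty$ because these functions are integrable. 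Thus, as first $t\to\infty$ and then $A\to\infty$, the old part $Y^A(t+\cdot)$ is negligible in probability, uniformly on every compact $u$-interval.

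For the recent part put $\widetilde S_k:=S_k-t$, so $Y_A(t+u)=\sum_k X_{k+1}(u-\widetilde S_k)\1_{\{\widetilde S_k>-A\}}$ is a sum of finitely many terms once $u$ is confined to a bounded set. It is classical that for a nonlattice renewal process with finite mean increment the point process $\sum_k\delta_{S_k-t}$ converges vaguely in distribution on $\mr$ to the stationary two-sided renewal process $\sum_{k\in\mz}\delta_{S_k^\ast}$; since the marks $(X_{k+1})$ are i.i.d.\ and independent of $(\xi_j)$, hence of $\nu(t)$, the marked point process $\sum_k\delta_{(S_k-t,\,X_{k+1})}$ converges, on the configurations of points in a bounded window carrying marks in $D(\mr)$, to a stationary marked point process, and a routine check using the reversibility of the stationary renewal process identifies the induced limit of $(Y_A(t+u))_u$ with the truncated stationary process $Y_A^\ast$, obtained from $Y^\ast$ by retaining only the immigrants with $S_k^\ast>-A$. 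Let $\Phi$ map a marked configuration $\sum_j\delta_{(s_j,x_j)}$ to $\big(\sum_{j:\,s_j\leq u_i}x_j(u_i-s_j)\big)_{i=1}^n$. Because every $S_k^\ast$ has a nonatomic law and $\mathcal{D}_X$ is at most countable, almost surely no point of the limiting configuration equals any $u_i$ and almost surely each $X_{j+1}$ is continuous at every evaluation point $u_i-S_k^\ast$; hence $\Phi$ is almost surely continuous at the limit law, and the continuous mapping theorem gives $(Y_A(t+u_1),\ldots,Y_A(t+u_n))\dod(Y_A^\ast(u_1),\ldots,Y_A^\ast(u_n))$ as $t\to\infty$. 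The a.s.\ absolute convergence of the series defining $Y^\ast$ is obtained along the way from $\me[\sum_{k\geq 0}(|X_{k+1}(u+S_k^\ast)|\wedge 1)\1_{\{S_k^\ast\geq -u\}}]\leq\sup_{a\in\mr}\int_{[0,\infty)}G(a+s)\,U(\mathrm{d} s)$, the right-hand side being finite for a dRi $G$; letting $A\to\infty$ (so $Y_A^\ast(u_i)\to Y^\ast(u_i)$ a.s.) and combining with the negligibility of the old part yields \eqref{main_relation}.

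For part (b) the same scheme is run, but $\Phi$ must now be continuous into $(D(\mr),d)$, i.e.\ in the $J_1$ sense on every compact interval. Fix a window $[a,b]$. Each summand $u\mapsto X_{k+1}(u-S_k^\ast)$ is a time-translate of the path of $X_{k+1}$, and the addition map $D(\mr)^m\to D(\mr)$ is continuous at $m$-tuples of functions having pairwise disjoint sets of discontinuities; so it is enough to show that, almost surely, the limiting summands have no common discontinuity. Fix $k\neq k'$ and condition on $(\xi_j)_{j\in\mz}$, $U$ and $\xi_0$: then $s:=S_k^\ast-S_{k'}^\ast$ is a constant, distributed unconditionally as $S_{|k-k'|}$ and independent of $(X_{k+1},X_{k'+1})$; bounding the probability of a common jump of $X_{k+1}(\cdot-S_k^\ast)$ and $X_{k'+1}(\cdot-S_{k'}^\ast)$ by the expected number of such jumps and using the independence of $X_{k+1}$ and $X_{k'+1}$, one sees this probability is positive only when $s\in\Delta_X$. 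By \eqref{no_common_disc_ass}, $\mmp\{s\in\Delta_X\}=\mmp\{S_{|k-k'|}\in\Delta_X\}=0$; a union bound over the countably many pairs $(k,k')$ gives the required disjointness, and together with the nonatomicity of the $S_k^\ast$ (no point at an endpoint of $[a,b]$) this makes $\Phi$ almost surely continuous at the limit law, so that $Y_A(t+\cdot)\Rightarrow Y_A^\ast(\cdot)$ in $(D(\mr),d)$ for each fixed $A$. Letting $A\to\infty$, a bound using \eqref{eq:intensity=Lebesgue} and the same $\varepsilon$-interval covering gives $\sup_{u\in[a,b]}|Y^\ast(u)-Y_A^\ast(u)|\to 0$ a.s., so $Y_A^\ast\to Y^\ast$ in $(D(\mr),d)$ a.s.\ (which in particular furnishes a version of $Y^\ast$ with paths in $D(\mr)$). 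A standard convergence-together argument applied to $Y(t+\cdot)=Y_A(t+\cdot)+Y^A(t+\cdot)$---using that the compact-uniform smallness of $Y^A(t+\cdot)$ forces $d(Y(t+\cdot),Y_A(t+\cdot))\to 0$ in probability, by Proposition \ref{conv_in_d}(ii)---then upgrades these three ingredients to \eqref{main_relation_func}.

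The step I expect to be the main obstacle is the continuity analysis of $\Phi$ in part (b): the $J_1$-topology does not tolerate coinciding jumps of two distinct summands, and hypothesis \eqref{no_common_disc_ass}, through the conditioning computation above, is exactly what excludes it. Secondary points needing care are the identification of the weak limit with $(Y^\ast(u))_{u\in\mr}$ via the reversibility of the stationary renewal process, and the verification that the old part is asymptotically negligible uniformly on compact intervals.
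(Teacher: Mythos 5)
Your proposal is correct and follows essentially the same route as the paper: truncate to a window of recent immigrants, apply the continuous mapping theorem to the weak convergence of the (marked) point process $\sum_{k\geq 0}\delta_{t-S_k}$ toward the stationary configuration $\sum_{k\in\mz}\delta_{S_k^\ast}$, verify continuity of the summation functional via pairwise disjointness of the jump sets using \eqref{no_common_disc_ass}, control the discarded old part with the key renewal theorem, and conclude by a convergence-together argument. The only slip is that $S_k^\ast-S_{k'}^\ast$ is \emph{not} distributed as $S_{|k-k'|}$ when the indices straddle $0$ (the size-biased variable $\xi_0$ enters there); the needed conclusion $\mmp\{S_k^\ast-S_{k'}^\ast\in\Delta_X\}=0$ still holds because the atoms of $\xi_0$ coincide with those of $\xi$.
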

\begin{rem}\label{remark1}
Condition \eqref{no_common_disc_ass} needs to be checked only if
the set $\mathcal{D}_X$ contains more than one element, and the
distribution of $\xi$ has a discrete component. Otherwise, it
holds automatically.
\end{rem}
\begin{rem}
Establishing weak convergence of finite-dimensional distributions
followed by checking the tightness is the standard approach to
proving weak convergence in the Skorokhod space. To prove Theorem
\ref{main1} we take another route: the two statements of the
theorem are treated independently, the main technical tool being
the continuous mapping theorem applied to relation
\eqref{point_process_conv_2}. It is known that, for any
$\varepsilon>0$, the sequences $(S_n)_{n\in\mn_0}$ and
$(S^\ast_k)_{k\in\mn_0}$ can be coupled such that they become
$\varepsilon$-close with probability one, see
\cite[pp.\;74--75]{Lindvall:1992}. By exploiting this observation,
it is proved in \cite[Theorem 2.1]{Iksanov+Marynych+Meiners:2014}
that $Y(t)$ converges in distribution to $Y^\ast(0)$ as
$t\to\infty$ for deterministic $X$. Elaborating on the ideas of
the proof of the cited theorem, we could have suggested another
proof of \eqref{main_relation} which would be intuitively more
appealing than the proof given below. However, we have not been
able to overcome the considerable technical obstacles arising when
attempting to prove \eqref{main_relation_func} in this way.
\end{rem}

\section{Discussion of the assumptions of Theorem \ref{main1}}  \label{dri}

Suppose that $X$ is as defined in the introduction, that is,
with probability one, $X$ takes values in $D(\mr)$ and $X(t)=0$ for all
$t<0$. In this subsection, we first derive equivalent conditions for
the direct Riemann integrability of the functions $G(t)=\me
[|X(t)|\wedge 1]$ and
$H_\varepsilon(t)=\me[\sup_{u\in[t,\,t+\varepsilon]}|X(u)|\wedge
1]$ that are more suitable for applications.

With probability one, $X$ takes values in $D(\mr)$, and hence is continuous almost
everywhere (a.e.). This carries over to $t\mapsto
|X(t)|\wedge 1$. Now notice that if $t \mapsto |X(t)|\wedge 1$ is
continuous at $t_0$ and $t_0+\varepsilon$, then $t\mapsto
\sup_{u\in[t,\,t+\varepsilon]} (|X(u)|\wedge 1)$ is continuous at
$t_0$. Consequently, with probability one, the process $t \mapsto
\sup_{u\in[t,\,t+\varepsilon]}(|X(u)|\wedge 1)$ is
a.e.~continuous. This implies that the set of $t_0$ such that $t
\mapsto \sup_{u\in[t,\,t+\varepsilon]}(|X(u)|\wedge 1)$ is
discontinuous at $t_0$ with positive probability has Lebesgue
measure $0$. From Lebesgue's dominated convergence theorem we
conclude that $G$ and $H_{\varepsilon}$ are a.e.~continuous. Since
$G$ and $H_{\varepsilon}$ are also bounded, they must be
locally Riemann integrable. From this we conclude that the direct
Riemann integrability of $G$ is equivalent to
\begin{equation}    \label{dri_mean_eq}
\sum_{k\geq 0} \sup_{t\in[k,\,k+1)} \me [|X(t)|\wedge1]<\infty,
\end{equation}
while the direct Riemann integrability of $H_\varepsilon$ is
equivalent to
\begin{equation}    \label{dri_path_eq_hard}
\sum_{k\geq 0}\sup_{t\in[k,\,k+1)} \me \Big[\sup_{u\in[t,\,t+\varepsilon]} (|X(u)|\wedge 1) \Big] < \infty.
\end{equation}
Moreover, \eqref{dri_path_eq_hard} is equivalent to
\begin{equation}    \label{dri_path_eq}
\sum_{k\geq 0} \me \Big[\sup_{u\in[k,\,k+1)}(|X(u)|\wedge 1)\Big] < \infty
\end{equation}
which particularly implies that $H_{\varepsilon}$ is dRi for every $\varepsilon>0$ whenever it is
dRi for some $\varepsilon>0$.
Indeed,
\begin{eqnarray*}
\sum_{k\geq 0} \me \Big[ \sup_{u\in[k,\,k+1)}(|X(u)|\wedge 1) \Big]
& \leq &
\sum_{k\geq 0}\me \bigg[ \sum_{j=0}^{\lfloor \varepsilon^{-1}\rfloor}\sup_{u\in[k+j\varepsilon,\,k+(j+1)\varepsilon)}(|X(u)|\wedge 1) \bigg]\\
& = & \sum_{j=0}^{\lfloor \varepsilon^{-1}\rfloor} \sum_{k\geq 0}
H_{\varepsilon}(k+j\varepsilon) ~\leq~ \sum_{j=0}^{\lfloor
\varepsilon^{-1}\rfloor} \sum_{k \geq 0} \sup_{t\in[k,\,k+1)}
H_{\varepsilon}(t).
\end{eqnarray*}
Thus  \eqref{dri_path_eq_hard} implies \eqref{dri_path_eq}. To see
that \eqref{dri_path_eq} implies \eqref{dri_path_eq_hard} use
\eqref{eq:sup_k sup_a,b} below with $a=0$ and $b=\varepsilon$.

Here are several cases in which \eqref{dri_mean_eq} and \eqref{dri_path_eq} are equivalent:
\begin{itemize}
    \item[(i)]
        $X(t) \equiv h(t)$ a.s.~for a deterministic function $h$;
    \item[(ii)]
        there exists $t_0>0$ such that, with probability one, $|X(t)|$ is nonincreasing on $[t_0,\infty)$;
    \item[(iii)]
        $\mmp\{|X(t)| \in (0,1)\} = 0$ for all $t \geq 0$,
        $\tau:=\inf\{t\geq 0: X(t)=0\} <\infty$ a.s.~and $X(t)=0$ for all $t \geq \tau$ a.s.~in which case
        \begin{equation}\label{1010}
        \eqref{dri_mean_eq} \Leftrightarrow \eqref{dri_path_eq}
        \Leftrightarrow  \me \tau<\infty.
        \end{equation}
\end{itemize}
Indeed, in case (i), one can omit the expectations in
\eqref{dri_mean_eq} and \eqref{dri_path_eq} since $X$ is
deterministic. The resulting formulae coincide. In case (ii), for
all $k \geq t_0$, $\sup_{t \in [k,\,k+1)} \me[|X(t)| \wedge 1] =
\me[|X(k)| \wedge 1]$ and $\me[\sup_{u \in [k,\,k+1)} |X(u)|
\wedge 1] = \me[|X(k)| \wedge 1]$. Hence the infinite series in
\eqref{dri_mean_eq} and \eqref{dri_path_eq} coincide for all but
finitely many terms. Finally, assume that $X$ satisfies the
assumptions of case (iii). We show that \eqref{1010} holds.

\noindent ``\eqref{dri_mean_eq}\,$\Rightarrow \me \tau<\infty$'':
From the equality
$$ \me [|X(t)|\wedge 1] = \mmp\{|X(t)|\geq 1\} = \mmp\{\tau > t\}$$
we deduce that
$$
\infty > \sum_{k \geq 0}\sup_{t\in[k,\,k+1)} \me \big[|X(t)|\wedge 1\big] \geq \sum_{k\geq 0} \mmp\{\tau > k\} \geq \me \tau.
$$

\noindent ``$\me \tau<\infty \Rightarrow$\,\eqref{dri_path_eq}'':
This implication follows from
\begin{eqnarray*}
\sum_{k\geq 0} \me \Big[\sup_{t\in[k,\,k+1)}(|X(t)|\wedge 1)\Big]
= \me \bigg[\sum_{k=0}^{\lfloor \tau \rfloor}\sup_{t\in[k,\,k+1)}(|X(t)|\wedge 1)\bigg]
\leq \me [\lfloor \tau \rfloor+1]<\infty.
\end{eqnarray*}
Finally the implication
``\eqref{dri_path_eq}\,$\Rightarrow$\,\eqref{dri_mean_eq}'' is obvious.

We now give an example in which $X$ satisfies \eqref{dri_mean_eq},
yet does not satisfy \eqref{dri_path_eq}.
\begin{example}
Let $\eta$ be uniformly distributed on $[0,1]$ and set
$$  X(t):=\sum_{k\geq 1} \1_{\big\{k+{k^2\over k^2+1}\eta \leq t < k+\eta\big\}}, \ \ t\geq 0.  $$
Then inequality \eqref{dri_path_eq} fails to hold,
for $\sup_{t \in [k,\,k+1)} (|X(t)|\wedge 1) = \sup_{t \in [k,\,k+1)} X(t) = 1$ a.s.
On the other hand,
$\sup_{t\in [k,\,k+1)} \me [|X(t)|\wedge 1] = \sup_{t\in [k,\,k+1)} \me [X(t)] = (k^2+1)^{-1}$ for $k\in\mn$, and inequality
\eqref{dri_mean_eq} holds true.
\end{example}

To make the distinction between \eqref{dri_mean_eq} and \eqref{dri_path_eq} more transparent,
we note that \eqref{dri_path_eq} entails the direct Riemann integrability of
$X$ with probability one and, as a consequence, $\lim_{t \to \infty} X(t)=0$ a.s.
On the other hand, the preceding example demonstrates that \eqref{dri_mean_eq}
only guarantees $\lim_{t \to \infty} X(t)=0$ in probability.

We close this subsection with an example in which $Y(t)$ fails to
converge in distribution, as $t\to\infty$.
\begin{example}
Let $X(t)=h(t) := (1 \wedge 1/t^2)\1_{\Q}(t)$, $t \geq 0$, where
$\Q$ denotes the set of rationals. Observe that $G(t)=\me
[|X(t)|\wedge 1]=h(t)$ is Lebesgue integrable but not Riemann
integrable. Let the distribution of $\xi$ be such that $\mmp\{\xi
\in \Q \cap (0,1]\} = 1$ and $\mmp\{\xi = r\}
> 0$ for all $r \in \Q \cap (0,1]$. Then the distribution of $\xi$
is non-lattice. It is clear that $Y(t)=0$ for
$t\in\mr\backslash\Q$. On the other hand, according to Example 2.6
in \cite{Iksanov+Marynych+Meiners:2014}
\begin{equation*} Y(t)  ~\stackrel{\mathrm{d}}{\to}~    \sum_{k \geq 0}
f(S^*_k)    \quad   \text{as } t \to \infty,\ t \in \Q,
\end{equation*}
where $f(t) = 1 \wedge 1/t^2$ for $t \geq 0$. Note that the latter
random variable is positive a.s.
\end{example}

\section{Applications}
Suppose that the first three assumptions of Theorem \ref{main1} hold.

\begin{example}
Let $\mmp\{X(t)\geq 0\}=1$ and $\mmp\{X(t)\in (0,1)\}=0$ for each $t\geq 0$.
Suppose that, with probability one, $X$ gets absorbed into the unique absorbing state
$0$. This means that the random variable $\tau:=\inf\{t: X(t)=0\}$
is a.s.~finite, and $X(t)=0$ for $t \geq \tau$.
Then $\me \tau<\infty$ is necessary and sufficient for \eqref{main_relation} to hold.
Moreover, under the additional assumption \eqref{no_common_disc_ass}, $\me \tau < \infty$ is equivalent to \eqref{main_relation_func}.

Indeed, if $\me\tau<\infty$, then \eqref{1010} ensures that the
functions $G(t)=\me [|X(t)\big|\wedge 1]$ and
$H_{\varepsilon}(t)=\me \big[
\sup_{u\in[t,\,t+\varepsilon]}(|X(u)|\wedge 1)\big]$ (for arbitrary
$\varepsilon>0$) are dRi on $[0,\infty)$. Therefore, \eqref{main_relation} and,
under the additional assumption \eqref{no_common_disc_ass},
\eqref{main_relation_func} follow from Theorem \ref{main1}.

Suppose now that $\me \tau=\infty$. By the strong law of large
numbers, for any $\rho\in (0,\mu)$, there exists an a.s.~finite
random variable $M$ such that $S_k^\ast>(\mu-\rho)k$ for $k\geq
M$. Therefore, for any $u \in \mr$,
\begin{eqnarray*}
\sum_{k\geq 0}\mmp\big\{X_{k+1}(u+S_k^\ast)\1_{\{u+S_k^\ast \geq 0\}}\geq 1|(S_j^\ast)_j\big\}
& = &
\sum_{k\geq 0} \1_{\{u+S_k^\ast\geq 0\}}\mmp\big\{\tau_{k+1}>u+S_k^\ast\big|(S_j^\ast)_j\big\}  \\
& = &
\sum_{k\geq \nu^\ast(-u)}\mmp\big\{\tau_{k+1}-u> S_k^\ast\big|(S_j^\ast)_j\big\}    \\
& \geq &
\sum_{k\geq M \vee \nu^\ast(-u)}\mmp\big\{\tau-u>(\mu-\rho)k\big|(S_j^\ast)_j \big\} \\
& =& \infty \ \ \text{a.s.},
\end{eqnarray*}
where $\tau_k:=\inf\{t: X_k(t)=0\}$, and $\nu^\ast(-u) := \inf\{k \in \mn_0:  S_k^\ast\geq -u\}$.
Given $(S_j^\ast)_j$, the
series $\sum_{k\geq 0}X_{k+1}(u+S_k^\ast)\1_{\{S_k^\ast\geq -u\}}$
does not converge a.s.~by the three series theorem. Since the
general term of the series is nonnegative, then, given
$(S_j^\ast)_j$, this series diverges a.s. Hence, for each
$u\in\mr$, $\sum_{k\in\mz}X_{k+1}(u+S_k^\ast)\1_{\{S_k^\ast\geq
-u\}}=\infty$ a.s., and \eqref{main_relation} cannot hold.

\begin{itemize}
    \item[(a)]
        Let $X$ be a subcritical or critical Bellman-Harris process
         (see Chapter IV in \cite{Athreya+Ney:1972} for the definition and many properties) with a single ancestor,
        and let $\eta$ and $N$ be independent with
        $\eta$ being distributed according to the life length distribution
        and $N$ according to the offspring distribution of the process.
        Suppose that $\mmp\{\eta=0\}=0$, $\mmp\{N=0\}<1$ and
        $\mmp\{N=1\}<1$. Then $Y$ is a subcritical or critical
        Bellman-Harris process with (single) immigration at the epochs of
        a renewal process. $Y$ satisfies \eqref{main_relation} if and only if
        $\me \tau<\infty$. In \cite[Theorem 1]{Pakes+Kaplan:1974}
        the same criterion is derived for the convergence of the one-dimensional distributions via
        an analytic argument. Under the condition $\me \eta<\infty$, which
        entails $\me \tau<\infty$, weak convergence of the one-dimensional
        distributions of a subcritical process with immigration was proved
        in Theorem 3 in \cite{Jagers:1968}. Notice that in the two cited papers multiple immigration is allowed.
        Finally we note that relation
        \eqref{main_relation_func} is equivalent to $\me \tau < \infty$ under the additional condition
        \eqref{no_common_disc_ass}, which holds, for instance, if the
        distribution of $\eta$ is continuous.

    \item[(b)]
        Suppose that $X(t)=\1_{\{\eta>t\}}$ for a
        nonnegative random variable $\eta$. Observe that $X$ is a (degenerate) Bellman-Harris process with $\mmp\{N=0\}=1$.
        Because of its simplicity and its numerous applications
        the corresponding process $Y$ has received considerable attention
        in the literature. We only mention the following interpretations:
        \begin{itemize}
            \item $Y(t)$ is the number of busy servers at time $t$ in the $GI/G/\infty$ queue \cite{Kaplan:1975};
            \item $Y(t)$ is the number of active downloads at time $t$ in a computer network \cite{Konstantopoulos+Lin:98,Mikosch+Resnick:2006};
            \item
                $Y(t)$ is the difference between the number of visits to the segment
                $[0,t]$ of the standard random walk $\big(S_n\big)_{n\in\mn_0}$
                and the perturbed random walk
                $\big(S_n+\eta_{n+1}\big)_{n\in\mn_0}$, where $\big(\eta_n\big)$
                are independent copies of $\eta$ \cite{Alsmeyer+Iksanov+Meiners:2015}.
        \end{itemize}
        In this case $\tau=\eta$. Hence the corresponding $Y$ satisfies
        \eqref{main_relation} and, under the additional assumption
        \eqref{no_common_disc_ass}, \eqref{main_relation_func} if and only if $\me \eta<\infty$.

    \item[(c)]
        Let $X$ be a birth and death process with
        $X(0)=i\in\mn$ a.s. Suppose that $X$ is eventually absorbed at $0$ with probability one.
        Since \eqref{no_common_disc_ass} holds we conclude that the
        corresponding $Y$ satisfies \eqref{main_relation_func} if and only
        if $\me \tau<\infty$. A criterion for the finiteness of $\me \tau$
        expressed in terms of infinitesimal intensities is given in \cite[Theorem 7.1 on p.\;149]{Karlin+Taylor:1975}.
\end{itemize}
\end{example}

\begin{example}
Let $X(t)=\eta f(t)$, where $\eta$ is a random variable
independent of $\xi$, and $f: \mr\to\mr$, with $f(t)=0$ for $t<0$,
belongs to $D(\mr)$.

\begin{itemize}
    \item[(a)]
        Suppose that $\mmp\{\eta=b\}=1$, and that the function $t \to |f(t)|\wedge 1$ is dRi.
        Then relation \eqref{main_relation} and, under the additional assumption \eqref{no_common_disc_ass},
        relation \eqref{main_relation_func} hold.
        Weak convergence of one dimensional distributions was proved in Theorem 2.1 in \cite{Iksanov+Marynych+Meiners:2014}
        under the assumption that the function $t \mapsto |f(t)|$ is dRi, not assuming, however, that $f\in D(\mr)$.\!\footnote{If $f\in D(\mr)$,
        then $f$ is bounded on compact intervals, and the function $t \mapsto |f(t)|\wedge 1$ is dRi if and only if so is $t \mapsto |f(t)|$.}
    \item[(b)]
        Suppose $f(t)=e^{-at}$, $a>0$. If $\me [\log^+|\eta|] < \infty$, then the nonincreasing function
        $$H_\varepsilon(t) = \me \bigg[\underset{u\in [t,\,t+\varepsilon]}{\sup}\, \big(|\eta|e^{-au}\wedge 1\big)\bigg] = \me [|\eta|e^{-at}\wedge 1\big]$$
        is integrable, hence dRi. Further, \eqref{no_common_disc_ass} holds since $X$ is a.s.~continuous.
        Thus Theorem \ref{main1} implies \eqref{main_relation_func}.
        If $\me [\log^+|\eta|] = \infty$, then, by \cite[Theorem 2.1]{Goldie+Maller:2000},
        $$
				\lim_{n \to \infty} \Big|\sum_{k=0}^n \eta_{k+1}\exp(-a S_k^\ast)\Big|=\infty
				$$ in probability
        where $\eta_1,\eta_2,\ldots$ are i.i.d.~copies of $\eta$.
        The latter implies that \eqref{main_relation} and \eqref{main_relation_func} cannot hold.
\end{itemize}
\end{example}

\section{Proof of Theorem \ref{main1}}\label{prmain1}

Let $M_p(\mr)$ be the set of Radon point measures on $\mr$ with
the topology of vague convergence $\vaguec$, and let
$\delta_{x_0}$ denote the probability measure concentrated at
point $x_0 \in \mr$. Recall that, for $m_n, m \in M_p(\mr)$,
\begin{equation*}
m_n \ \vaguec \ m,\;\;n\to\infty
\end{equation*}
if and only if
\begin{equation*}
\lim_{n \to \infty} \int_\mr f(x)m_n({\rm d}x)=\int_{\mr}f(x)m({\rm d}x)
\end{equation*}
for any continuous function $f:\mr\to\mr^+$ with compact support.
According to Proposition 3.17 in \cite{Resnick:1987}, there is a
metric $\rho$ on $M_p(\mr)$ which makes $(M_p(\mr),\rho)$ a
complete separable metric space, while convergence in this metric
is equivalent to vague convergence. Further, for later use,
recall that any $m \in M_p(\mr)$ has a representation of the form
$m = \sum_{k \in \mz} \delta_{t_k}$ for $t_k \in \mr$. Moreover,
this representation is unique subject to the constraints $t_k \leq
t_{k+1}$ for all $k \in \mz$ and $t_{-1} < 0 \leq t_0$. The $t_k$
are given by
\begin{equation}    \label{eq:unique representation of m}
t_k ~=~ \begin{cases}
            \inf\{t \geq 0: m([0,t]) \geq k+1\} &   \text{if } k \geq 0;    \\
            -\inf\{t \geq 0: m([-t,0)) \geq -k\}    &   \text{if } k < 0.
            \end{cases}
\end{equation}

Before we prove Theorem \ref{main1}, we give three auxiliary
lemmas. Lemma \ref{import} given next and the continuous mapping
theorem are the key technical tools in the proof of Theorem
\ref{main1}.
\begin{lemma}   \label{import}
Assume that $\me\xi<\infty$ and that the distribution of $\xi$ is
nonlattice. Then
\begin{equation*}
\sum_{k\geq 0}\delta_{t-S_k} \ \Rightarrow \
\sum_{j\in\mz}\delta_{S^\ast_j},\;\;t\to\infty
\end{equation*}
on $(M_p(\mr), \rho)$.
\end{lemma}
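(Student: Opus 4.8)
\noindent\emph{Proposed proof.}
The plan is to reduce the assertion to convergence of Laplace functionals and then to a renewal-theoretic computation. By the standard criterion for weak convergence of point measures (see \cite{Resnick:1987}; note that $\sum_{j\in\mz}\delta_{S^\ast_j}$ is a.s.\ Radon because $S^\ast_j\to\pm\infty$ as $j\to\pm\infty$ a.s.\ by the strong law of large numbers and $\mu<\infty$), it is enough to prove that
\[
\me\Big[\exp\Big(-\sum_{k\geq 0}f(t-S_k)\Big)\Big] \ \longrightarrow \ \me\Big[\exp\Big(-\sum_{j\in\mz}f(S^\ast_j)\Big)\Big] \qquad \text{as } t\to\infty
\]
for every continuous $f\colon\mr\to[0,\infty)$ with support contained in some interval $[-A,A]$.

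\noindent\emph{Step 1: a structural representation at finite $t$.} First I would fix $t>A$ and observe that in $\sum_{k\geq 0}f(t-S_k)$ the summand indexed by $k$ vanishes whenever $S_k<t-A$, since then $t-S_k>A$; the borderline event $S_k=t-A$ has probability $0$ for all but at most countably many $t$ and may be disregarded. Let $\theta:=\nu(t-A)$ and let $R_t:=S_\theta-(t-A)>0$ be the overshoot of $(S_n)_{n\in\mn_0}$ over the level $t-A$. Then $t-S_{\theta+i}=A-R_t-(S_{\theta+i}-S_\theta)$ for $i\geq 0$, and by the strong Markov property at the stopping time $\theta$ the joint law of $\big(R_t,(S_{\theta+i}-S_\theta)_{i\geq 0}\big)$ is the product of the law of $R_t$ and the law of a zero-delayed random walk with $\xi$-distributed increments. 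Consequently
\[
\sum_{k\geq 0}f(t-S_k) \ \od \ \sum_{i\geq 0}f\big(A-R_t-\widetilde S_i\big),
\]
where $(\widetilde S_i)_{i\geq 0}$ is a zero-delayed random walk with $\xi$-distributed increments that is independent of $R_t$.

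\noindent\emph{Step 2: passage to the limit and identification.} Next, the nonlattice assumption together with $\mu<\infty$ yields $R_t\dod R$ with $\mmp\{R\in{\rm d}x\}=\mu^{-1}\mmp\{\xi>x\}\1_{(0,\infty)}(x)\,{\rm d}x$, i.e.\ the limiting overshoot distribution, which coincides with the law of $S^\ast_0$ (see \cite[Section 3.10]{Resnick:1992}). Since $(\widetilde S_i)_{i\geq 0}$ does not depend on $t$ and is independent of $R_t$, one has $\big(R_t,(\widetilde S_i)_{i\geq 0}\big)\dod\big(R,(S_i)_{i\geq 0}\big)$ with $R$ independent of $(S_i)_{i\geq 0}$; moreover $(r,(s_i)_{i\geq 0})\mapsto\exp\big(-\sum_{i\geq 0}f(A-r-s_i)\big)$ is bounded by $1$ and continuous at every point with $A-r-s_i\notin\{-A,A\}$ for all $i$, an event of full probability under the limit law because $R$ has a density. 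The continuous mapping theorem then gives
\[
\me\Big[\exp\Big(-\sum_{k\geq 0}f(t-S_k)\Big)\Big] \ \longrightarrow \ \me\Big[\exp\Big(-\sum_{i\geq 0}f(A-R-S_i)\Big)\Big].
\]
Because $R\od S^\ast_0$ and $S^\ast_0=U\xi_0$ is independent of $(\xi_k)_{k\geq 1}$, the right-hand side equals $\me[\exp(-\sum_{i\geq 0}f(A-S^\ast_i))]$. Finally $\sum_{j\in\mz}\delta_{S^\ast_j}$ is shift-invariant and also reflection-invariant --- under $x\mapsto -x$ it is, up to reindexing, the stationary renewal process built from $1-U$ in place of $U$ and with the two one-sided increment sequences interchanged, hence has the same distribution; so its law is invariant under $x\mapsto A-x$, and $\sum_{j\in\mz}f(S^\ast_j)\od\sum_{j\in\mz}f(A-S^\ast_j)$. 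As $S^\ast_j<0$, and hence $A-S^\ast_j>A$, for every $j\leq -1$, those summands vanish, whence $\sum_{j\in\mz}f(A-S^\ast_j)=\sum_{j\geq 0}f(A-S^\ast_j)$ a.s., and the displayed limit equals $\me[\exp(-\sum_{j\in\mz}f(S^\ast_j))]$, as needed.

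\noindent\emph{Main obstacle.} The hard part will be Step~2: extracting $R_t\dod R$ --- which is the one place where the nonlattice and finite-mean hypotheses enter, via Blackwell's renewal theorem --- together with the almost sure continuity of the Laplace-type functional at the limiting configuration, in spite of $\xi$ possibly having atoms on the boundary $\{-A,A\}$; this last issue can be dispatched either through the density of $R$ or, alternatively, by first enlarging $[-A,A]$ to an open interval containing $\mathrm{supp}\,f$. The remaining steps amount to careful bookkeeping with the strong Markov property and with the shift- and reflection-invariance of the stationary renewal process; one could also replace the use of Blackwell's theorem by the $\varepsilon$-coupling of $(S_n)$ with a stationary renewal sequence, but the Laplace-functional route seems shorter.
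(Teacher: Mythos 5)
Your argument is correct in substance, but it takes a genuinely different route from the paper. Both proofs reduce the claim to convergence of Laplace functionals $\me[\exp(-\sum_{k\geq 0}f(t-S_k))]$ for continuous, compactly supported $f\geq 0$. The paper then disposes of the hard part by citation: it shifts $f$ so that its support lies in $[0,\infty)$, observes that the shifted function is dRi, and invokes Theorem 2.1 of \cite{Iksanov+Marynych+Meiners:2014} to get $\sum_{k\geq 0}g(t-S_k)\dod\sum_{k\geq 0}g(S^\ast_k)$, after which only the shift-invariance of $\sum_{k\in\mz}\delta_{S^\ast_k}$ is needed. You instead re-derive that one-dimensional convergence from scratch: decompose at the first passage time over the level $t-A$, use the strong Markov property to factor the sum into an overshoot $R_t$ and an independent fresh walk, apply the classical overshoot limit theorem (the only place where the nonlattice and finite-mean hypotheses enter), and then identify the limit with $\sum_{j\in\mz}f(S^\ast_j)$ via the shift- and reflection-invariance of the stationary renewal point process. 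Your version is self-contained and makes the mechanism visible; the paper's version is shorter and leans on machinery it has already built. Your identification step is sound: $f(A-S^\ast_j)=0$ for $j\leq -1$ since $S^\ast_j<0$ a.s., and the invariance of the law of $\sum_j\delta_{S^\ast_j}$ under $x\mapsto A-x$ is correctly justified by exchanging $U$ with $1-U$ and the two one-sided increment sequences.

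One step should be tightened. The map $(r,(s_i)_{i\geq 0})\mapsto\exp(-\sum_{i}f(A-r-s_i))$ is \emph{not} continuous on $\mr\times\mr^{\mn_0}$ with the product topology (componentwise convergence of $(s_i^{(n)})_i$ gives no control over which indices enter the compact window where $f$ lives), so the continuous mapping theorem cannot be applied in the form you state. The repair is immediate: since $(\widetilde S_i)_{i\geq 0}$ has a fixed law, is independent of $R_t$, and does not depend on $t$, integrate it out first and set $\psi(r):=\me[\exp(-\sum_{i\geq 0}f(A-r-S_i))]$; dominated convergence (only finitely many summands are nonzero a.s., locally uniformly in $r$, because $S_i\to\infty$) shows $\psi$ is bounded and continuous on all of $\mr$, and $R_t\dod R$ then gives $\me[\psi(R_t)]\to\me[\psi(R)]$. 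Incidentally, your precautions about the boundary points $\pm A$ are unnecessary: a continuous $f$ supported in $[-A,A]$ automatically satisfies $f(\pm A)=0$, so neither the event $\{S_k=t-A\}$ nor continuity at configurations hitting $\{-A,A\}$ causes any trouble.
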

\begin{proof}
Let $h:\mr\to \mr^{+}$ be a continuous function with a compact
support. According to Proposition 3.19 on p.\;153 in \cite{Resnick:1987},
it suffices to prove
$$  \lit \me \bigg[ \exp\bigg(-\sum_{k\geq 0}h(t-S_k)\bigg)\bigg] = \me \bigg[ \exp\bigg(-\sum_{j\in\mz}h(S^{\ast}_j)\bigg)\bigg].  $$
Let $A := \inf \{t: h(t) \not = 0\} > -\infty$ and $g(t) := h(t+A)$, $t \in \mr$.
Then $g(t) = 0$ for $t<0$ and $g$ is dRi on $\mr^+$ as a continuous function with compact support.
Hence, Theorem 2.1 in \cite{Iksanov+Marynych+Meiners:2014} applies and yields
$$  \sum_{k\geq 0} g(t-S_k\big) \dod    \sum_{k \geq 0}g(S^{\ast}_k),\ \ t\to\infty.    $$
This implies convergence of the associated Laplace transforms, hence,
\begin{align*}
\me \bigg[\exp\bigg(-\sum_{k\geq 0} &h(t-S_k)\bigg)\bigg]= 
\me \bigg[\exp\bigg(-\sum_{k\geq 0} g\big(t-A-S_k\big)\bigg)\bigg]  \\
&\to
\me\bigg[\exp\bigg(-\sum_{k \geq 0} g(S^\ast_k)\bigg)\bigg] = \me\bigg[\exp\bigg(-\sum_{k \geq 0} h(S^\ast_k+A)\bigg)\bigg]   \\
&= \me\bigg[\exp\bigg(-\sum_{k \in \mz}
h(S^\ast_k+A)\bigg)\bigg] ~=~ \me\bigg[\exp\bigg(-\sum_{k \in \mz}
h(S^\ast_k)\bigg)\bigg]
\end{align*}
as $t \to \infty$ where the next-to-last equality is due to
the fact that $h(S^\ast_k+A) = 0$ for $k<0$ while the last
equality follows from the distributional shift invariance of
$\sum_{k \in \mz} \delta_{S^\ast_k}$ \cite[Chapter 8, Theorem
4.1]{Thorisson:2000}.
\end{proof}

\begin{lemma}   \label{continuity_lemma1}
Suppose that $t_n\to\ t$ on $\mr$ and $f_n\to f$ in $(D(\mr),d)$, as
$n\to\infty$. Then
$$
f_n(t_n+\cdot)\to f(t+\cdot),\;\;n\to\infty
$$
in $(D(\mr),d)$.
\end{lemma}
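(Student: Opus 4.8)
The plan is to derive the statement directly from the characterization of convergence in $(D(\mr),d)$ provided by Proposition \ref{conv_in_d}, exploiting the equivalence of conditions (i) and (ii). Since $f_n \to f$ in $(D(\mr),d)$, there are time changes $\lambda_n \in \Lambda$ such that, for every pair of finite reals $a < b$,
\[
\lim_{n\to\infty} \max\Big\{\sup_{u\in[a,\,b]}|f_n(\lambda_n(u)) - f(u)|,\ \sup_{u\in[a,\,b]}|\lambda_n(u) - u|\Big\} = 0.
\]
First I would transport these time changes to the shifted processes by setting $\mu_n(u) := \lambda_n(t+u) - t_n$, $u \in \mr$, and check that $\mu_n \in \Lambda$: each $\mu_n$ is strictly increasing and continuous (being $\lambda_n$ precomposed with a translation and then shifted by the constant $t_n$), and $\mu_n(u) \to \pm\infty$ as $u \to \pm\infty$ because $\lambda_n(\pm\infty) = \pm\infty$ while $t_n$ is fixed.

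Next, for arbitrary finite $a<b$ I would verify condition (ii) of Proposition \ref{conv_in_d} for the sequence $(f_n(t_n+\cdot))_n$ with limit $f(t+\cdot)$ and time changes $(\mu_n)_n$. Since $t_n + \mu_n(u) = \lambda_n(t+u)$, the substitution $v = t+u$ gives
\[
\sup_{u\in[a,\,b]}|f_n(t_n+\mu_n(u)) - f(t+u)| = \sup_{v\in[t+a,\,t+b]}|f_n(\lambda_n(v)) - f(v)|,
\]
which tends to $0$ by the displayed limit above applied on the interval $[t+a,\,t+b]$. For the time changes, the decomposition $\mu_n(u) - u = (\lambda_n(t+u) - (t+u)) + (t - t_n)$ yields
\[
\sup_{u\in[a,\,b]}|\mu_n(u) - u| \le \sup_{v\in[t+a,\,t+b]}|\lambda_n(v) - v| + |t_n - t| \to 0 \quad \text{as } n\to\infty.
\]
Combining these two displays and invoking the implication (ii) $\Rightarrow$ (i) of Proposition \ref{conv_in_d} then gives $f_n(t_n+\cdot) \to f(t+\cdot)$ in $(D(\mr),d)$.

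I do not expect any genuine obstacle here. The only mildly delicate point is that one needs a single sequence of time changes that simultaneously absorbs the $J_1$-deviation of $\lambda_n$ and the numerical drift $t_n - t$; this is exactly what the additive split in the last display handles, and it works because the convergence in Proposition \ref{conv_in_d}(ii) is required on every bounded interval, hence in particular on the translated window $[t+a,\,t+b]$.
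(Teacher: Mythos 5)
Your proof is correct and takes essentially the same route as the paper's: both verify condition (ii) of Proposition \ref{conv_in_d} by taking the time changes witnessing $f_n\to f$ and subtracting $t_n$, so that the drift $|t_n-t|\to 0$ is absorbed into the uniform deviation of the new time change from the identity. The only cosmetic difference is that the paper first reduces to $t=0$ without loss of generality, whereas you treat general $t$ directly by also precomposing with the translation by $t$.
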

\begin{proof}
Without loss of generality we assume that $t=0$. It suffices
to prove that there exist $\lambda_n \in \Lambda$, $n\in\mn$ such that, for any $-\infty<a<b<\infty$,
\begin{equation}    \label{lemma_conv7}
\lim_{n \to \infty} \max\Big\{\sup_{u \in [a,\,b]}|\lambda_n(u)-u|,\sup_{u \in [a,\,b]}|f_n(t_n+\lambda_n(u))-f(u)|\Big\} = 0.
\end{equation}
By assumption, $f_n\to f$ in $(D(\mr),d)$. Hence, there are $\mu_n\in\Lambda$, $n\in\mn$ such that, for any $-\infty<a<b<\infty$,
\begin{equation}\label{lemma_conv3}
\lim_{n \to \infty} \max\Big\{\sup_{u\in [a,\,b]}|\mu_n(u)-u|,\sup_{u \in [a,\,b]}|f_n(\mu_n(u))-f(u)|\Big\} = 0.
\end{equation}
Put $\lambda_n(u):=\mu_n(u)-t_n$ and note that $\lambda_n\in\Lambda$.
Then \eqref{lemma_conv3} can be rewritten as
\begin{equation*}
\lim_{n \to \infty} \max\Big\{\sup_{u\in [a,\,b]}|\lambda_n(u)-u+t_n|,\sup_{u\in
[a,\,b]}|f_n(t_n+\lambda_n(u))-f(u)|\Big\}=0
\end{equation*}
which is equivalent to \eqref{lemma_conv7}, for $\lim_{n \to \infty} t_n=0$.
\end{proof}

\begin{rem}
As was kindly communicated to us by one of the referees the
counterpart of Lemma \ref{continuity_lemma1} with $(D(\mr),d)$ replaced
by $(D[0,\infty), d_1)$, where $d_1$ is the standard $J_1$-metric,
may fail to hold. Take, for instance, $f_n(t): = f(t): =
\1_{[1,\infty)}(t)$ and $t_n: = 1-n^{-1}$. Then $f_n(t_n)=0$ does
not converge to $f(1)=1$ as $n\to\infty$ which implies that
$f_n(t_n+\cdot)$ do not converge to $f(1+\cdot)$ in $(D[0,\infty),
d_1)$.
\end{rem}

Denote by $D(\mr)^{\mz}$ the Cartesian product of countably many copies
of $D(\mr)$ endowed with the topology of componentwise convergence via
the metric
$$  d^\mz((f_k)_{k\in\mz},(g_k)_{k\in\mz}):=\sum_{k\in\mz}2^{-|k|}\big(d(f_k,g_k)\wedge 1\big). $$
Note that $(D(\mr)^{\mz},d^\mz)$ is a complete and separable metric space.
Now consider the metric space $(M_p(\mr)\times D(\mr)^\mz,\rho^{\ast})$
where $\rho^{\ast}(\cdot,\cdot):=d^\mz(\cdot,\cdot)+\rho(\cdot,\cdot)$
(i.e., convergence is defined componentwise). As the Cartesian
product of complete and separable spaces, $(M_p(\mr)\times D(\mr)^\mz,\rho^{\ast})$ is complete and separable.

For fixed $c>0$, $l\in\mn$ and $(u_1,\ldots,u_l)\in\mr^l$, define the mapping
$\phi^{(l)}_c:M_p(\mr)\times D(\mr)^\mz \to \mr^l$ by
\begin{equation*}
\phi^{(l)}_c\big(m,(f_k(\cdot))_{k\in\mz}\big):=\bigg(\sum_{k}f_k(t_k+u_j)\1_{\{|t_k|\leq
c\}}\bigg)_{j=1,\ldots,l}
\end{equation*}
and the mapping $\phi_c:M_p(\mr)\times D(\mr)^\mz \to D(\mr)$ by
\begin{equation*}
\phi_c\big(m,(f_k(\cdot))_{k\in\mz}\big) :=
\sum_{k}f_k(t_k+\cdot)\1_{\{|t_k| \leq c\}}
\end{equation*}
where in the definition of $\phi^{(l)}_c$ and $\phi_c$, the
$t_k$ are given by \eqref{eq:unique representation of m}. It can
be checked that $\phi^{(l)}_c$ and $\phi_c$ are measurable
mappings. For $f\in D(\mr)$, denote by ${\rm Disc}(f)$ the set of
discontinuity points of $f$ on $\mr$.

\begin{lemma}   \label{continuity_lemma2}
The mapping $\phi^{(l)}_c$ is continuous at all points
$(m,(f_k)_{k\in\mz})$ for which $m(\{-c,0,c\})=0$ and for
which $u_1,\ldots,u_l$ are continuity points of $f_k(t_k+\cdot)$
for all $k\in\mz$. $\phi_c$ is continuous at all points
$(m,(f_k)_{k\in\mz})$ satisfying $m(\{-c,0,c\})=0$ and ${\rm
Disc}(f_k(t_k+\cdot))\cap {\rm Disc}(f_j(t_j+\cdot))=\varnothing$
for $k\neq j$.
\end{lemma}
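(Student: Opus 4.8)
The plan is to establish sequential continuity. Fix a point $(m,(f_k)_{k\in\mz})$ meeting the stated hypotheses, write $m=\sum_{k\in\mz}\delta_{t_k}$ in the canonical form \eqref{eq:unique representation of m}, and let $(m_n,(f^{(n)}_k)_{k\in\mz})\to(m,(f_k)_{k\in\mz})$ in $\rho^\ast$; by definition of $\rho^\ast$ this means $m_n\vaguec m$ in $M_p(\mr)$ and $f^{(n)}_k\to f_k$ in $(D(\mr),d)$ for every $k\in\mz$. Write $(t^{(n)}_k)_{k\in\mz}$ for the canonical enumeration of the atoms of $m_n$. The goal is to show $\phi^{(l)}_c(m_n,(f^{(n)}_k)_k)\to\phi^{(l)}_c(m,(f_k)_k)$ in $\mr^l$ under the first set of hypotheses, and $\phi_c(m_n,(f^{(n)}_k)_k)\to\phi_c(m,(f_k)_k)$ in $(D(\mr),d)$ under the second.

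First I would localize the contributing atoms. Put $K:=\{k\in\mz:t_k\in[-c,c]\}$; since $m$ is Radon this is a finite (possibly empty) set of consecutive integers, and by hypothesis $t_k\notin\{-c,0,c\}$ for $k\in K$. Pick $\delta\in(0,c)$ so small that $m$ charges neither $[-\delta,\delta]$ nor the $\delta$-neighbourhoods of $\pm c$, and so that $\{t_k:k\in\mz\}\cap[-c,c]\subset[-c+\delta,c-\delta]$. Using $m_n\vaguec m$ together with the standard description of vague convergence of point measures (cf.\ \cite[Chapter~3]{Resnick:1987}) I would then argue that, for all large $n$, the atoms of $m_n$ inside $[-c,c]$ are again indexed exactly by $K$, lie in $(-c,c)$, and satisfy $t^{(n)}_k\to t_k$ for every $k\in K$: the absence of an atom of $m$ at $0$ keeps the canonical enumeration (which pivots at $0$) in step, while the absence of atoms at $\pm c$ forces $\1_{\{|t^{(n)}_k|\le c\}}=\1_{\{|t_k|\le c\}}$ for all $k\in\mz$ and all large $n$. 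Consequently, for such $n$, both $\phi^{(l)}_c$ and $\phi_c$ evaluated at $(m_n,(f^{(n)}_k)_k)$ reduce to the finite sums $\big(\sum_{k\in K}f^{(n)}_k(t^{(n)}_k+u_j)\big)_{j=1,\dots,l}$ and $\sum_{k\in K}f^{(n)}_k(t^{(n)}_k+\cdot)$ respectively, and likewise without the superscripts $n$ for the limit point.

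Next I would pass to the limit inside these finite sums. For each $k\in K$, Lemma \ref{continuity_lemma1} applied to $f^{(n)}_k\to f_k$ and $t^{(n)}_k\to t_k$ yields $f^{(n)}_k(t^{(n)}_k+\cdot)\to f_k(t_k+\cdot)$ in $(D(\mr),d)$. For $\phi^{(l)}_c$ I would use that $J_1$-convergence entails pointwise convergence at every continuity point of the limit — which follows from Proposition \ref{conv_in_d}(ii) by writing $g_n(v)=g_n(\lambda_n(\lambda_n^{-1}(v)))$ and noting $\lambda_n^{-1}(v)\to v$ — so that, each $u_j$ being a continuity point of $f_k(t_k+\cdot)$, we get $f^{(n)}_k(t^{(n)}_k+u_j)\to f_k(t_k+u_j)$; summing over the finite set $K$ gives continuity of $\phi^{(l)}_c$. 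For $\phi_c$ the point is that addition is not $J_1$-continuous in general, and here the hypothesis that the sets $\mathrm{Disc}(f_k(t_k+\cdot))$, $k\in\mz$, are pairwise disjoint is exactly what is needed: since then $\mathrm{Disc}\big(\sum_{k\in K}f_k(t_k+\cdot)\big)=\bigcup_{k\in K}\mathrm{Disc}(f_k(t_k+\cdot))$, any continuity point of the limit is a common continuity point of all the summands. Fixing continuity points $a<b$ of the limit, each $f^{(n)}_k(t^{(n)}_k+\cdot)|_{[a,b]}$ converges in $(D[a,b],d_0^{a,b})$ by Proposition \ref{conv_in_d}(iii), and an induction on $|K|$ — using the $D[a,b]$-version of the continuity of addition at pairs of functions with disjoint discontinuity sets (see, e.g., \cite{Whitt:2002}), the running partial sum and the next summand never sharing a discontinuity by disjointness — gives $\sum_{k\in K}f^{(n)}_k(t^{(n)}_k+\cdot)|_{[a,b]}\to\sum_{k\in K}f_k(t_k+\cdot)|_{[a,b]}$ in $D[a,b]$. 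Letting $a<b$ range over the continuity points of the limit and invoking Proposition \ref{conv_in_d}(iii) once more then yields convergence in $(D(\mr),d)$.

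I expect the main obstacle to be the bookkeeping in the localization step: one must be sure that for large $n$ the canonical labels of the atoms of $m_n$ in $[-c,c]$ coincide with those of $m$ (this is precisely where $m(\{0\})=0$ enters) and that the weights $\1_{\{|t^{(n)}_k|\le c\}}$ do not flicker (this is where $m(\{\pm c\})=0$ enters), so that the a priori infinite sums defining $\phi^{(l)}_c$ and $\phi_c$ collapse to stable finite sums; once that is settled, the remaining arguments are routine applications of Lemma \ref{continuity_lemma1}, the pointwise-evaluation property of $J_1$-limits, and — for $\phi_c$ — the continuity of addition in $D[a,b]$ under the no-common-discontinuity condition.
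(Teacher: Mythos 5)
Your proposal is correct and follows essentially the same route as the paper's proof: localize to the finitely many atoms in $[-c,c]$ via vague convergence (the paper invokes Theorem 3.13 of \cite{Resnick:1987}, which packages your $\delta$-neighbourhood bookkeeping, including the role of $m(\{0\})=0$ for the canonical labels and $m(\{\pm c\})=0$ for the indicators), then apply Lemma \ref{continuity_lemma1}, pointwise convergence at continuity points for $\phi^{(l)}_c$, and continuity of finitely many additions at functions with pairwise disjoint discontinuity sets (Theorem 4.1 of \cite{Whitt:1980}) for $\phi_c$. No gaps.
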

\begin{proof}
Let $c>0$ and suppose that
\begin{equation}\label{lemma_conv4}
\big(m_n,(f^{(n)}_k)_{k\in\mz}\big) \to
\big(m,(f_k)_{k\in\mz}\big),\quad n\to\infty
\end{equation}
on $(M_p(\mr)\times D(\mr)^\mz,\rho^{\ast})$ where
$m(\{-c,0,c\})=0$. Then, in particular, $m_n \vaguec m$ as $n
\to \infty$. Since $m(\{-c,0,c\})=0$, we can apply Theorem
3.13 in \cite{Resnick:1987}, which says that
$m_n([-c,0])=m([-c,0]) =: r_-$ and $m_n([0,c]) = m([0,c]) =: r_+$
for all sufficiently large $n$. For these $n$, with the definition
of $t_k^{(n)}$ and $t_k$ according to \eqref{eq:unique
representation of m}, we have
\begin{align*}
m_n(\cdot \cap [-c,0]) = \sum_{k=1}^{r_-} \delta_{t^{(n)}_{-k}},    \quad & m_n(\cdot \cap [0,c]) = \sum_{k=0}^{r_+-1} \delta_{t^{(n)}_{k}},    \\
m(\cdot \cap [-c,0]) = \sum_{k=1}^{r_-} \delta_{t_{-k}},    \text{ and } & m_n(\cdot \cap [0,c]) = \sum_{k=0}^{r_+-1} \delta_{t_{k}},
\end{align*}
where, of course, the empty sum is understood to be $0$. Theorem
3.13 in \cite{Resnick:1987} further implies that there is
convergence of the points of $m_n$ in $[-c,0]$ to the points of
$m$ in $[-c,0]$ and analogously with $[-c,0]$ replaced by $[0,c]$.
Since $m$ has no point at $0$, this implies that $t_k^{(n)} \to
t_k$ as $n \to \infty$ for $k=-r_-, \ldots, r_+-1$. On the other
hand, \eqref{lemma_conv4} entails $\lim_{n \to \infty}
f^{(n)}_k=f_k$ in $(D(\mr),d)$ for $k=-r_-, \ldots, r_+-1$.
Therefore, Lemma \ref{continuity_lemma1} ensures that
\begin{equation}\label{1221}
f^{(n)}_k(t^{(n)}_k+\cdot)\to f_k(t_k+\cdot),\;\;n\to \infty
\end{equation}
in $(D(\mr),d)$ for $k=-r_-, \ldots, r_+-1$.

Now assume that $u_1,\ldots,u_l$ are continuity points of $f_k(t_k+\cdot)$ for all $k\in\mz$.
We show that then
\begin{equation}    \label{eq:continuity}
\phi^{(l)}_c(m_n,(f^{(n)}_k)_{k\in\mz}) \to
\phi^{(l)}_c(m,(f_k)_{k\in\mz}),\quad n \to \infty.
\end{equation}
Indeed, in the given situation, \eqref{1221} implies that
$$  \big(f^{(n)}_k(t^{(n)}_k+u_1),\ldots,f^{(n)}_k(t^{(n)}_k+u_l)\big)
~\to~   \big(f_k(t_k+u_1),\ldots, f_k(t_k+u_l)\big),\;\;n\to
\infty $$ for $k=-r_-, \ldots, r_+-1$. Summation of these
relations over $k=-r_-, \ldots, r_+-1$ yields
\eqref{eq:continuity}.

Theorem 4.1 in \cite{Whitt:1980} tells us that addition on
$D(\mr)\times D(\mr)$ is continuous at those $(x,y)$ for which ${\rm
Disc}(x)\cap {\rm Disc}(y)=\varnothing$. Since this immediately
extends to any finite number of summands we conclude that
relations \eqref{1221} entail $$
\phi_c(m_n,(f^{(n)}_k)_{k\in\mz})=\sum_{k=-r_-}^{r_+-1}
f_k^{(n)}(t_k^{(n)}+\cdot) \ \to \ \sum_{k=-r_-}^{r_+-1}
f_k(t_k+\cdot)=\phi_c(m,(f_k)_{k\in\mz}),\;\;n\to\infty$$ in
$(D(\mr),d)$ provided that ${\rm Disc}(f_k(t_k+\cdot))\cap {\rm
Disc}(f_j(t_j+\cdot))=\varnothing$ for $k\neq j$.
\end{proof}


\begin{proof}[Proof of Theorem \ref{main1}]
We start by showing that the Lebesgue integrability of $G(t)=\me
[|X(t)|\wedge 1]$ ensures $|Y^\ast(u)|<\infty$ a.s. for each $u
\in \mr$. To this end, fix $u\in\mr$ and set
$ \mathcal{Z}_k:=X_{k+1}(u+S_k^\ast)\1_{\{S_k^\ast\geq -u\}}$, $k\in\mz$. We
infer
\begin{eqnarray}
\sum_{k \in \mz} \me [|\mathcal{Z}_k|\wedge 1] & = &
\sum_{k \in \mz} \me [(|X_{k+1}(u+S_k^\ast)|\wedge 1)\1_{\{S_k^\ast\geq-u\}}]    \notag \\
& = & \sum_{k \in \mz} \me [G(u+S_k^\ast)\1_{\{S_k^\ast\geq -u\}}]
~=~ \frac1\mu \int_0^\infty G(x) \, {\rm d}x ~<~ \infty
\label{eq:sum of Z_k}
\end{eqnarray}
having utilized \eqref{eq:intensity=Lebesgue} for the last
equality. Therefore $\sum_{k\geq 0}| \mathcal{Z}_k| < \infty$ a.s.~by the
two-series theorem which implies $|Y^\ast(u)|<\infty$ a.s.

Next, we prove that direct Riemann integrability of
$H_{\varepsilon}(t)=\me [\sup_{u\in[t,\,t+\varepsilon]}
|X(u)|\wedge 1]$ for some $\varepsilon > 0$ implies that $Y^\ast$
takes values in $D(\mr)$ a.s. Since locally uniform limits of elements
from $D(\mr)$ are again in $D(\mr)$, it suffices to check that
$Y^\ast(u):=\sum_{k\in\mz}X_{k+1}(u+S_k^\ast)$ converges uniformly
on every compact interval a.s. To this end, fix $a,b \in \mr$,
$a<b$. It suffices to consider the case when $a,b \in \mz$
(otherwise, replace $a$ by $\lfloor a \rfloor$ and $b$ by $\lceil
b \rceil$). Now notice that
\begin{equation*}
\sup_{u \in [a,\,b]} \sum_{k\in\mz} (|X_{k+1}(u+S_k^\ast)| \wedge
1)  ~\leq~  \sum_{k\in\mz} \sup_{u \in [a,\,b]}
(|X_{k+1}(u+S_k^\ast)| \wedge 1)
\end{equation*}
and that the series on the right-hand side of this equation is
measurable since the $X_{k+1}$, $k \in \mz$ take values in $D(\mr)$.
Further, we observe that
\begin{eqnarray}
\me\bigg[\sum_{k\in\mz} \sup_{u \in [a,\,b]}
(|X_{k+1}(u+S_k^\ast)| \wedge 1)\bigg]
& = &
\me\bigg[ \sum_{k\in\mz}  \me\bigg[\sup_{u \in [a,\,b]} (|X(u+S_k^\ast)| \wedge 1) \,\Big|\, (S^\ast_j)_{j \in \mz}\bigg]\bigg]   \notag  \\
& = &
\frac{1}{\mu} \int_{\mr} \me\bigg[\sup_{u \in [a,\,b]} (|X(u+t)| \wedge 1) \bigg]\, {\rm d}t  \notag  \\
& \leq &
\frac{1}{\mu} \sum_{k\in\mz}\sup_{t\in[k,\,k+1)} \me \bigg[\sup_{u\in[a+t,\,b+t]}|X(u)|\wedge 1\bigg]   \label{eq:sup_k sup_a,b 1st}
\end{eqnarray}
where the last equality is a consequence of
\eqref{eq:intensity=Lebesgue}. To check that the last series
converges, put $r := b-a$ and notice that
\begin{eqnarray}
\sum_{k\in\mz}\sup_{t\in[k,\,k+1)} \me \bigg[\sup_{u\in[a+t,\,b+t]}|X(u)|\wedge 1\bigg]
& \leq &
\sum_{k\in\mz} \me \bigg[\sup_{u\in[a+k,\,b+k+1)}|X(u)|\wedge 1\bigg]       \notag  \\
& \leq &
\sum_{j=0}^r \sum_{k\in\mz} \me \bigg[\sup_{u\in[a+k+j,\,a+k+j+1)}|X(u)|\wedge 1\bigg]  \notag  \\
& = &
\sum_{j=0}^r \sum_{k\in\mz} \me \bigg[\sup_{u\in[k,\,k+1)}|X(u)|\wedge 1\bigg]  \notag  \\
& = &
(r+1) \sum_{k \geq 0} \me \bigg[\sup_{u\in[k,\,k+1)}|X(u)|\wedge 1\bigg]        \label{eq:sup_k sup_a,b}
~<~ \infty
\end{eqnarray}
where the last equality follows from the fact that $X(u)=0$ for
$u<0$, while the finiteness of the last series is secured by
\eqref{dri_path_eq}. Thus, $\sum_{k\in\mz} (|X_{k+1}(u+S_k^\ast)|
\wedge 1)$ converges uniformly on $[a,b]$ a.s. Since the set of
$a,b \in \mz$ with $a<b$ is countable, $Y^\ast$ is indeed
$D(\mr)$-valued a.s.

Using Lemma \ref{import} and recalling that the space
$M_p(\mr)\times D(\mr)^\mz$ is separable we infer
\begin{equation}\label{point_process_conv_2}
\bigg(\sum_{k\geq 0}\delta_{t-S_k},(X_{k+1})_{k\in\mz}\bigg) \
\Rightarrow \
\bigg(\sum_{k\in\mz}\delta_{S^{\ast}_k},(X_{k+1})_{k\in\mz}\bigg), \
\ t\to\infty
\end{equation}
on $M_p(\mr)\times D(\mr)^\mz$ by Theorem 3.2 in
\cite{Billingsley:1968}.

\noindent {\it Proof of \eqref{main_relation_func}}. We shall use
Lemma \ref{continuity_lemma2}. To this end, observe that each
$S^{\ast}_j$ has an absolutely continuous distribution. In
particular, $\sum_{j \in \mz} \delta_{S^{\ast}_j}(\{-c,0,c\}) = 0$
a.s.\ for every $c>0$. Further, for $i<j$, let $D_{i+1}:={\rm
Disc}(X_{i+1})$, $D_{j+1}:={\rm Disc}(X_{j+1})$. For a set $A
\subset \mr$ and $b \in \mr$, we write $A-b$ for the set $\{a-b: a
\in A\}$. With this notation, we have
\begin{eqnarray*}
&&\hspace{-3cm}\mmp\big\{ {\rm Disc}\big(X_{i+1}(S^\ast_i+\cdot)\big) \cap {\rm Disc}\big(X_{j+1}(S^\ast_j+\cdot)\big)\neq \varnothing \big\}\\
&&= \mmp\big\{(D_{i+1}-S^\ast_i) \cap (D_{j+1}-S^\ast_j) \neq \varnothing \big\}  \\
&&\leq \mmp\big\{\big((D_{i+1}-S^\ast_i) \setminus (\mathcal{D}_{X}-S^\ast_i)\big) \cap (D_{j+1}-S^\ast_j) \neq \varnothing \big\}   \\
&&+\mmp\big\{(\mathcal{D}_{X}-S^\ast_i) \cap \big((D_{j+1}-S^\ast_j) \setminus
(\mathcal{D}_{X}-S^\ast_j)\big) \neq \varnothing \big\}\\
&&+\mmp\big\{(\mathcal{D}_{X}-S^\ast_i) \cap (\mathcal{D}_{X}-S^\ast_j) \neq \varnothing \big\}.
\end{eqnarray*}
We now argue that the last three summands vanish.
As to the first, using the independence of $(X_{i+1},X_{j+1})$ and $(S^\ast_i,S^\ast_j)$ and conditioning with respect to $(S^\ast_i,S^\ast_j)$
we conclude that it suffices to show that $\mmp\big\{\big((D_{i+1}-s^\ast_i) \setminus (\mathcal{D}_{X}-s^\ast_i)\big) \cap (D_{j+1}-s^\ast_j) \neq \varnothing \big\}=0$
for fixed $s^\ast_i,s^\ast_j \in \mr$, $s^\ast_i < s^\ast_j$.
Since $X_{i+1}(s^\ast_i+\cdot)$ and $X_{j+1}(s^\ast_j+\cdot)$ are independent, we can argue as in the proof of Lemma 4.3 in \cite{Whitt:1980}:
\begin{align*}
\mmp\big\{\big((D_{i+1}-s^\ast_i) \setminus
(\mathcal{D}_{X}-s^\ast_i)\big)&\cap (D_{j+1}-s^\ast_j) \neq
\varnothing \big\}\\
&=   \int \mmp\big\{X_{i+1}(s^\ast_i+\cdot) \in
A(y) \big\} \,  \mmp\{X(s^\ast_j + \cdot) \in {\rm d}y\}
\end{align*}
where $A(y) = \big\{x \in D(\mr): \big({\rm Disc}(x)\setminus (\mathcal{D}_{X}-s^\ast_i)\big)\cap {\rm Disc}(y) \not = \varnothing\big\}.$
For any $y \in D(\mr)$,
\begin{equation*}
\mmp\big\{X_{i+1}(s^\ast_i+\cdot) \in A(y) \big\}
= \sum_{t \in {\rm Disc}(y) \setminus (\mathcal{D}_{X}-s^\ast_i)} \mmp\big\{X_{i+1}(s^\ast_i+t) \neq X_{i+1}((s^\ast_i+t)-) \big\} = 0.
\end{equation*}
The second term can be treated similarly. As to the third term,
$\mmp\{(\mathcal{D}_{X}-S^\ast_i) \cap (\mathcal{D}_{X}-S^\ast_j)
\neq \varnothing \} \leq
\mmp\{S_j^{\ast}-S_i^{\ast}\in\Delta_X\}$. If $i \geq 0$ or $j<0$,
then $S^\ast_j-S^{\ast}_i \od S_{j-i}$. If $i<0\leq j$, then
$S^\ast_i-S^{\ast}_j \od \xi_0+S_{i-j-1}$. Since the sets of atoms
of distributions of $\xi_0$ and $\xi$ are the same, we conclude
that $\mmp\{S^\ast_j-S^{\ast}_i\in \Delta_X\}=0$ by
\eqref{no_common_disc_ass}. Consequently, $\mmp\{{\rm
Disc}(X_{j+1}(S^\ast_j+\cdot))\cap {\rm
Disc}(X_{i+1}(S^\ast_i+\cdot))\neq \varnothing\} = 0$ for $i\neq
j$. This justifies using Lemma \ref{continuity_lemma2}, according
to which $\phi_c$ is a.s.~continuous at
$\big(\sum_{k\in\mz}\delta_{S^\ast_k},(X_{k+1})_{k\in\mz}\big)$.
Applying now the continuous mapping theorem to
\eqref{point_process_conv_2} yields
\begin{eqnarray}
Y_c(t,u)
& := &
\sum_{k\geq 0} X_{k+1}(u+t-S_k)\1_{\{|t-S_k|\leq c\}}   \notag  \\
& \od & \phi_c\bigg(\sum_{k\geq
0}\delta_{t-S_k},(X_{k+1})_{k\in\mz}\bigg) ~\Rightarrow~
\phi_c \bigg(\sum_{k\in\mz}\delta_{S^{\ast}_k},(X_{k+1})_{k\in\mz}\bigg)   \notag  \\
& = & \sum_{k \in\mz}X_{k+1}(u+S^\ast_k)\1_{\{|S^\ast_k| \leq c\}}
=: Y^\ast_c(u), \ \ t\to\infty  \label{func_conv1}
\end{eqnarray}
in $(D(\mr),d)$. Here, it should be noticed that one equality in
the relation above is distributional rather than pathwise.
However, since $(X_{k+1})_{k \in \mz}$ is an i.i.d.\ sequence, its
distribution is invariant under permutations. And, what is more,
due to the independence between the sequences $(\xi_k)_{k \in
\mn}$ and $(X_k)_{k \in \mz}$, the distribution of $(X_{k+1})_{k
\in \mz}$ is even invariant under $(\xi_k)_{k \in \mn}$-measurable
permutations.

Using Proposition \ref{conv_in_d} and following the reasoning
in the proof of Proposition 4.18 in \cite{Resnick:1987} we
conclude that in order to prove \eqref{main_relation_func} it
suffices to check that
\begin{equation}\label{main_relation_func12}
Y(t+u) \ \Rightarrow \ Y^\ast(u),\quad t\to\infty
\end{equation}
in $(D[a,b],d_0^{a,b})$ for any $a$ and $b$, $a<b$ which are
not fixed discontinuities of $Y^\ast$. To this end, first
observe that \eqref{func_conv1} implies
\begin{equation}\label{func_conv12}
Y_c(t,\cdot) \ \Rightarrow \ Y^\ast_c(\cdot),\quad t\to\infty
\end{equation}
in $(D[a,b],d_0^{a,b})$ for any $a$ and $b$, $a<b$ which are not
fixed discontinuities of $Y^\ast_c$. It can be checked that for
each fixed $c>0$ the set of fixed discontinuities of $Y^\ast_c$ is
a subset of the set of fixed discontinuities of $Y^\ast$. Since
$Y^\ast$ has paths in $D(\mr)$ and is stationary, the set of fixed
discontinuities of $Y^\ast$ is empty. Hence \eqref{func_conv12}
holds for any $a$ and $b$, $a<b$. Now \eqref{main_relation_func12}
follows from Theorem 4.2 in \cite{Billingsley:1968} if we can
prove that
\begin{equation}\label{ya_conv} Y^\ast_c \ \Rightarrow \
Y^\ast,\quad c\to\infty
\end{equation}
in $(D[a,b],d_0^{a,b})$ and that
\begin{equation}\label{bill_cond1}
\lim_{c\to\infty}\limsup_{t\to\infty}
\mmp\bigg\{d_0^{a,b}(Y_c(t,\cdot),Y(t+\cdot))>\varepsilon\bigg\}=0
\end{equation}
for all $\varepsilon>0$ and any $a,b\in\mr$, $a<b$.

\noindent {\it Proof of \eqref{bill_cond1}}. Since $d_0^{a,b}$ is
dominated by the uniform metric on $[a,b]$ it suffices to prove
that
\begin{equation}    \label{bill_cond}
\lim_{c\to\infty}\limsup_{t\to\infty}
\mmp\bigg\{\sup_{u\in[a,\,b]}\bigg|\sum_{k\geq 0}
X_{k+1}(u+t-S_k)\1_{\{|t-S_k| >c\}} \bigg|>\varepsilon\bigg\}=0
\end{equation}
for all $\varepsilon>0$ and any $a,b\in\mr$, $a<b$.

To prove \eqref{bill_cond}, set
$M_k(t):=\sup_{u\in[a,\,b]}|X_k(u+t)|$, $k\in\mz$ and $K(t):=\me
[M_{1}(t)\wedge 1]$ and write
\begin{align*}
\mmp\bigg\{ & \sup_{u\in[a,\,b]}\bigg|\sum_{k\geq 0} X_{k+1}(u+t-S_k)\1_{\{|t-S_k| > c\}}\bigg|>\varepsilon\bigg\}  \\
&\leq \mmp\bigg\{\sum_{k\geq 0} M_{k+1}(t-S_k)\1_{\{|t-S_k| > c\}}>\varepsilon\bigg\}   \\
&\leq \mmp\bigg\{\sum_{k\geq 0}M_{k+1}(t-S_k)\1_{\{|t-S_k| > c,\,M_{k+1}(t-S_k)\leq 1\}}>\varepsilon/2\bigg\}\\
&\hphantom{\leq} + \mmp\bigg\{\sum_{k\geq 0}M_{k+1}(t-S_k)\1_{\{|t-S_k| > c,\,M_{k+1}(t-S_k) > 1\}}>\varepsilon/2\bigg\}\\
&\leq \frac{2}{\varepsilon}\me \bigg[ \sum_{k \geq 0}K(t-S_k)\1_{\{|t-S_k| > c\}}\bigg] + \sum_{k\geq 0}\mmp\big\{|t-S_k|>c,\,M_{k+1}(t-S_k)>1\big\}
\end{align*}
where the last inequality follows from Markov's inequality.
\eqref{eq:sup_k sup_a,b} implies
\begin{equation*}
\sum_{k \in \mz} \sup_{t \in [k,\,k+1)} K(t)  ~<~ \infty.
\end{equation*}
This together with the local Riemann integrability of $K$
imply that $K$ is dRi on $\mr$.
Consequently,
$$  \lit \me\bigg[\sum_{k\geq 0}K(t-S_k)\1_{\{|t-S_k| > c\}}\bigg] = \mu^{-1}\int_{\{|t|>c\}}K(t) \, {\rm d}t   $$
by the key renewal theorem. Observe also that the last expression tends to zero as $c\to\infty$.
Further, the function $t \mapsto \mmp\{M_1(t)>1\}$ is dRi on $\mr$, for
$$  \mmp\{M_1(t)>1\}\leq \me [M_1(t)\wedge 1] = K(t).   $$
Again from the key renewal theorem, we conclude that
$$  \lit \sum_{k\geq 0}\mmp\big\{|t-S_k|>c,\,M_{k+1}(t-S_k)>1\big\} = \frac{1}{\mu} \int_{\{|x|>c\}}\mmp\{M_1(x)>1\} \, {\rm d}x.   $$
The last expression tends to $0$ as $c \to \infty$, which
establishes \eqref{bill_cond}.

\noindent {\it Proof of \eqref{ya_conv}}. In fact, we
claim that even the stronger statement $Y^\ast_c  \to Y^\ast$ as
$c\to\infty$ in $(D(\mr),d)$ a.s.\ holds. To prove this, we fix
arbitrary $a,b\in\mz$, $a<b$ and observe that it is sufficient to
check that the right-hand side of
\begin{eqnarray*}
\sup_{u\in [a,b]} |Y^\ast_c(u) - Y^\ast(u)|
& = &
\sup_{u\in [a,b]} \bigg|\sum_{k\in\mz}X_{k+1}(u+S^{\ast}_k)\1_{\{|S^{\ast}_k|>c\}}\bigg|\\
&\leq&
\sum_{k\in\mz} \sup_{u \in [a,b]} |X_{k+1}(u+S_k^\ast)| \1_{\{|S^{\ast}_k|>c\}}
\end{eqnarray*}
tends to zero as $c\to\infty$ a.s. To this end, notice that
\begin{eqnarray*}
\sum_{k\in\mz} \sup_{u \in [a,b]} (|X_{k+1}(u+S_k^\ast)| \wedge 1)
\end{eqnarray*}
has finite expectation by \eqref{eq:sup_k sup_a,b 1st} and
\eqref{eq:sup_k sup_a,b} and, hence, $\sum_{k\in\mz} \sup_{u \in
[a,b]} |X_{k+1}(u+S_k^\ast)|<\infty$ a.s. Therefore,
\begin{equation*}
\sup_{u\in [a,b]} |Y^\ast_c(u) - Y^\ast(u)| ~\leq~  \sum_{k\in\mz}
\sup_{u \in [a,b]} |X_{k+1}(u+S_k^\ast)| \1_{\{|S^{\ast}_k|>c\}}
~\to~   0
\end{equation*}
as $c\to\infty$ a.s.\ by the monotone (or dominated) convergence
theorem. The asserted a.s.\ convergence of $Y^\ast_c$ to $Y^\ast$
as $c\to\infty$ in $(D(\mr),d)$ follows.

\noindent {\it Proof of \eqref{main_relation}}.
Fix $l\in\mn$ and real numbers $\alpha_1,\ldots,\alpha_l$ and $u_1,\ldots,u_l$. For $k\in\mz$,
the number of jumps of $X_{k+1}$ is at most countable a.s.
Since the distribution of $S_k^\ast$ is absolutely continuous, and
$S_k^\ast$ and $X_{k+1}$ are independent we infer
$$  \mmp\{S_k^\ast+u \in {\rm Disc}(X_{k+1})\}=0$$
for any $u\in\mr$. According to Lemma \ref{continuity_lemma2}, for
every $c>0$, the mapping $\phi^{(l)}_c$ is a.s.~continuous at
$\big(\sum_{k\in\mz}\delta_{S^\ast_k},(X_{k+1})_{k\in\mz}\big)$.
Now apply the continuous mapping theorem to
\eqref{point_process_conv_2} twice (first using the map
$\phi^{(l)}_c$ and then the map $(x_1,\ldots,x_l) \mapsto \alpha_1
x_1 + \ldots + \alpha_l x_l$) to obtain that
\begin{equation*}
\sum_{i=1}^l \alpha_i Y_c(t,u_i) \ \dod \
\sum_{i=1}^l\alpha_iY_c^\ast(u_i), \ \ t\to\infty.
\end{equation*}
The proof of \eqref{main_relation} is complete if we verify
\begin{equation}\label{ya_conv_3}
\sum_{i=1}^l \alpha_i Y^\ast_c(u_i) \ \dod \ \sum_{i=1}^l \alpha_i Y^\ast(u_i),\;\;c\to\infty
\end{equation}
and
\begin{equation}\label{bill_cond_4}
\lim_{c\to\infty} \limsup_{t\to\infty} \mmp\bigg\{\bigg|\sum_{i=1}^l \alpha_i \sum_{k\geq 0}X_{k+1}(u_i+t-S_k)\1_{\{|t-S_k| > c\}}\bigg|>\varepsilon\bigg\}=0
\end{equation}
for all $\varepsilon>0$. As to \eqref{ya_conv_3}, we claim
that the stronger statement $Y^\ast_c(u) \to Y^\ast(u)$ as $c \to
\infty$ a.s.\ for all $u \in \mr$ holds. Indeed, as we have shown
in \eqref{eq:sum of Z_k},
\begin{eqnarray*}
\me\bigg[\sum_{k \in \mz} |X_{k+1}(u+S_k^\ast)| \wedge 1\bigg]  ~<~ \infty,
\end{eqnarray*}
in particular, $\sum_{k \in \mz} |X_{k+1}(u+S_k^\ast)| < \infty$
a.s. Hence, by the monotone (or dominated) convergence theorem,
\begin{equation*}
|Y^\ast_c(u) - Y^\ast(u)| ~\leq~  \sum_{k \in \mz}
|X_{k+1}(u+S_k^\ast)| \1_{\{|S_k^{\ast}|>c\}}  ~\to~   0
\end{equation*}
as $c \to \infty$ a.s.
Further, \eqref{bill_cond_4} is a consequence of
$$  \lim_{c\to\infty}\limsup_{t\to\infty}\mmp\bigg\{\Big|\sum_{k\geq 0}X_{k+1}(u+t-S_k)\1_{\{|t-S_k| > c\}}\Big|>\varepsilon\bigg\} = 0 $$
for every $u \in \mr$. Write
\begin{align*}
\mmp\bigg\{ & \bigg|\sum_{k\geq 0} X_{k+1}(u+t-S_k)\1_{\{|t-S_k| > c\}}\bigg| > \varepsilon\bigg\}  \\
&\leq\mmp\bigg\{\sum_{k\geq 0}
|X_{k+1}(u+t-S_k)|\1_{\{|t-S_k| > c\}}>\varepsilon\bigg\}\\
&\leq \mmp\bigg\{\sum_{k\geq 0}|X_{k+1}(u+t-S_k)|\1_{\{|t-S_k| > c,\,|X_{k+1}(u+t-S_k)|\leq 1\}}>\varepsilon/2\bigg\}\\
&\hphantom{\leq} +\mmp\bigg\{\sum_{k\geq 0}|X_{k+1}(u+t-S_k)|\1_{\{|t-S_k| > c,\,|X_{k+1}(u+t-S_k)| > 1\}}>\varepsilon/2\bigg\}\\
&\leq \frac{2}{\varepsilon} \me \bigg[\sum_{k\geq 0}G(u+t-S_k)\1_{\{|t-S_k| > c\}}\bigg] + \sum_{k\geq 0}\mmp\big\{|t-S_k|>c,\,|X_{k+1}(u+t-S_k)|>1\big\}
\end{align*}
and observe that since $G$ is dRi on $[0,\infty)$, so is $t\mapsto
G(u+t)$ on $\mr$ whence
$$  \underset{c\to\infty}{\lim}\,\lit \me \bigg[\sum_{k\geq 0} G(u+t-S_k)\1_{\{|t-S_k| > c\}}\bigg]
= \frac{1}{\mu} \underset{c\to\infty}{\lim}\,\int_{\{|x|>c\}}G(u+x) \, {\rm d}x=0.  $$
In view of
$$  \mmp\{|X(u+t)|>1\}\leq G(u+t),  $$
the function $t\mapsto \mmp\{|X(u+t)|>1\}$ is dRi on $\mr$, which gives
\begin{eqnarray*}
&&\hspace{-2cm}  \underset{c\to\infty}{\lim}\,\lit \sum_{k\geq 0}\mmp\big\{|t-S_k|>c,\,|X_{k+1}(u+t-S_k)|>1\big\}\\
&&\hspace{2cm}=\frac{1}{\mu}\underset{c\to\infty}{\lim}\,\int_{\{|x|>c\}}\mmp\{|X(u+x)|>1\} \, {\rm d}x=0.
\end{eqnarray*}
This finishes the proof of \eqref{bill_cond_4}.
\eqref{ya_conv_3} can be checked along the same lines. We omit the details.
\end{proof}

\section*{Acknowledgements}
We express
our sincere gratitude to two anonymous referees who prepared
reports full of valuable feedback. Their comments greatly helped
improving the quality of our paper and its presentation.
\normalsize

\bibliography{IksMarMei_2015_2_Bib}

\end{document}